\newcommand*{\msc}[2][]{\href{https://mathscinet.ams.org/mathscinet/search/mscdoc.html?code=#2,(#1)}{Primary: #2\ifthenelse{\isempty{#1}}{}{; Secondary: #1}.}}
\protected\def\tikz@nonactivecolon{\ifmmode\mathrel{\mathop\ordinarycolon}\else:\fi}
\apptocmd{\sloppy}{\hbadness 10000\relax}{}{}
\theoremstyle{definition}
\newtheorem{defi}{Definition}[section]
\crefname{defi}{Definition}{Definitions}
\theoremstyle{plain}
\newtheorem{lemm}[defi]{Lemma}
\crefname{lemm}{Lemma}{Lemmata}
\newtheorem{theo}[defi]{Theorem}
\crefname{theo}{Theorem}{Theorems}
\newtheorem{prop}[defi]{Proposition}
\newtheorem*{clai}{Claim}
\crefname{theoenum}{Theorem}{Theorems}
\theoremstyle{remark}
\newtheorem{rema}[defi]{Remark}
\crefname{rema}{Remark}{Remarks}
\newtheorem{exam}[defi]{Example}
\newtheoremstyle{maintheorem}{}{}{\itshape}{}{\bfseries}{}{.5em}{#1 \!\thmnote{#3}.}
\theoremstyle{maintheorem}
\newtheorem*{mainthm}{Theorem}
\newcommand*{\itemtitle}[1]{\textbf{\emph{(#1). }}}
\newcommand{\overbar}[1]{\mkern 1.5mu\overline{\mkern-1.5mu#1\mkern-1.5mu}\mkern
1.5mu}
\newcommand*{\NN}{\mathbb{N}}
\newcommand*{\ZZ}{\mathbb{Z}}
\newcommand*{\QQ}{\mathbb{Q}}
\newcommand*{\RR}{\mathbb{R}}
\let\leq\leqslant
\let\geq\geqslant
\let\phi\varphi
\DeclareMathOperator{\id}{id}
\DeclareMathOperator{\cone}{cone}
\DeclareMathOperator{\supp}{supp}
\DeclareMathOperator{\pr}{pr}
\DeclareMathOperator{\Aut}{Aut}
\DeclareMathOperator{\res}{res}
\DeclareMathOperator{\GL}{GL}
\newcommand*{\covering}[1]{\overbar{#1}}
\DeclareMathOperator{\Ore}{Ore}
\newcommand*{\Whw}{\ensuremath{\operatorname{Wh}^\omega}}
\newcommand*{\Kw}{\ensuremath{K_1^\omega}}
\newcommand*{\Kwr}{\ensuremath{{\widetilde K}_1^\omega}}
\newcommand*{\Kr}{\ensuremath{{\widetilde K}_1}}
\newcommand*{\torsionu}{\ensuremath{\rho_u^{(2)}}}
\newcommand*{\Poly}{{\ensuremath{\mathcal{P}}}}
\newcommand*{\PolyT}{{\ensuremath{\mathcal{P}_T}}}
\newcommand*{\bl}{\ensuremath{b^{(2)}}}
\newcommand*{\Dab}[1][D]{{#1}^\times_{\textrm{ab}}}
\newcommand*{\bag}[1][D]{b^{#1}}
\newcommand*{\chiag}[1][D]{\chi^{#1}}
\newcommand*{\Pag}[1]{P^{#1}}
\newcommand*{\PLtwo}{P_{L^2}}
\newcommand*{\torsionag}[1][D]{\rho_{#1}}
\newcommand*{\linnelld}{\mathcal{D}}
\begin{document}

\title{Agrarian and $L^2$-Invariants}

\author{Fabian Henneke}
\email{\href{mailto:henneke@uni-bonn.de}{henneke@uni-bonn.de}}
\address{Max-Planck-Institut für Mathematik, Vivatsgasse 7, 53111 Bonn, Germany}

\author{Dawid Kielak}
\email{\href{mailto:dkielak@math.uni-bielefeld.de}{dkielak@math.uni-bielefeld.de}}
\address{Fakultät für Mathematik, Universität Bielefeld, Postfach 100131, 33501 Bielefeld, Germany}

\subjclass[2010]{\msc[12E15, 16S35, 20E06, 57Q10]{20J05}}

\begin{abstract}
We develop the theory of agrarian invariants, which are algebraic counterparts to $L^2$-invariants. Specifically, we introduce the notions of agrarian Betti numbers, agrarian acyclicity, agrarian torsion and agrarian polytope for finite free $G$-CW complexes together with a fixed choice of a ring homomorphism from the group ring $\ZZ G$ to a skew field. For the particular choice of the Linnell skew field $\linnelld(G)$, this approach recovers most of the information encoded in the corresponding $L^2$-invariants.

As an application, we prove that for agrarian groups of deficiency $1$, the agrarian polytope admits a marking of its vertices which controls the Bieri--Neumann--Strebel invariant of the group, improving a result of the second author and partially answering a question of Friedl--Tillmann.

We also use the technology developed here to prove the Friedl--Tillmann conjecture on polytopes for two-generator one-relator groups; the proof forms the contents of another article.
\end{abstract}

\maketitle

%%%%%%%%%%%%%%%%%%%% Document

\section{Introduction}

In 2017, Friedl--Lück~\cite{FL2017} broadened the arsenal of $L^2$-invariants: to the more classical $L^2$-Betti numbers and $L^2$-torsion, they added the \emph{universal $L^2$-torsion} and the \emph{$L^2$-torsion polytope}.
The definitions involve some operator algebra technology, and a little $K$-theory. Practically however, it transpired, e.g., in~\cites{FunkeKielak2018,Kielak2018}, that these new $L^2$-invariants can be computed using linear algebra over skew fields. The current article redevelops much of the $L^2$-theory from this (more algebraic) point of view. The resulting theory of \emph{agrarian invariants} is at the same time conceptually simpler and applicable to a greater number of  groups.

\subsubsection*{The invariants}
Recall that a group $G$ is \emph{agrarian} if its integral group ring $\ZZ G$ embeds in a skew field. This terminology was introduced in~\cite{Kielak2018}, but the idea dates back to Malcev~\cite{Malcev1948}, and is a central theme of the work of Cohn~\cite{Cohn1995}.
Taking a specific \emph{agrarian embedding} $\ZZ G \hookrightarrow D$ for some skew field $D$, or more generally an \emph{agrarian map} $\ZZ G \to D$, allows us to define the notion of $D$-agrarian Betti numbers: when $G$ acts cellularly on a CW-complex $X$, we simply compute the $D$-dimension of the homology of $D \otimes_{\ZZ G} C_\ast$, where $C_\ast$ is the cellular chain complex of $X$.
When $D$ is the skew field $\mathcal D(G)$ introduced by Linnell in~\cite{Linnell1993} (assuming that $G$ is torsion-free and satisfies the Atiyah conjecture), the $D(G)$-agrarian Betti numbers are precisely the $L^2$-Betti numbers.
We show in \cref{prop:betti:independence} that for two non-equivalent agrarian embeddings, there is always a CW complex whose agrarian Betti numbers with respect to the two embeddings differ.

When the agrarian Betti numbers vanish and $G$ acts on $X$ cocompactly, we define the \emph{agrarian torsion}, in essentially the same way as Whitehead or Reidemeister torsion is defined. Again, when $\mathcal D(G)$ is Linnell's skew field, we obtain an invariant very closely related to the universal $L^2$-torsion. In fact, in this case agrarian and universal $L^2$-torsion often contain the same amount of information by a theorem of Linnell--Lück~\cite{LL2018}.

The vanishing of $L^2$-Betti numbers is guaranteed when $X$ fibres over the circle due to a celebrated theorem of Lück; the agrarian Betti numbers also vanish in this setting, as we show in \cref{theo:betti:torus}, provided that the agrarian map used satisfies the additional technical condition of being \emph{rational} (see \cref{defi:agrarian:rational}). Let us remark here that every agrarian map can be turned into a rational one, whose target we will usually denote by $D_r$.

The final invariant, the \emph{agrarian polytope}, is a little more involved. In the context of $L^2$-invariants, one can write the universal $L^2$-torsion as a fraction of two elements of a (twisted) group ring of the free part of the abelianisation of $G$. Both the numerator and the denominator can be converted into polytopes, using the Newton polytope construction, and the $L^2$-torsion polytope is defined as the formal difference of these Newton polytopes. The $L^2$-torsion polytope naturally lives in the polytope group of $G$, defined in~\cite{FL2017} and investigated further by Funke~\cite{Funke2016}.

In the agrarian setting it is precisely the notion of rationality which allows us to express the agrarian torsion as a fraction of two elements of a (twisted) group ring of the free part of the abelianisation of $G$, in complete analogy to the $L^2$ case.
The agrarian polytope is then constructed in the same way as the $L^2$-torsion polytope.

\smallskip
The main advantage of agrarian invariants over $L^2$-invariants lies in the fact that they are defined for a group $G$ as long as $\ZZ G$ maps to \emph{any} skew field -- not necessarily the one known to exist if $G$ were to satisfy the Atiyah conjecture. Even when we require the agrarian map to be injective, the class of agrarian groups is \emph{a priori} larger than the class of torsion-free groups satisfying the Atiyah conjecture. Moreover, there are explicit classes
of groups of topological interest, such as surface-by-surface groups, which are known to be agrarian, but at the time of writing were not known to satisfy the Atiyah conjecture (see~\cite{Kielak2018}*{Section~4} for a more substantial discussion).

An inconvenience that comes with our more general approach is that for a torsion-free group $G$ not known to satisfy the Atiyah conjecture, there is no longer a canonical choice of an agrarian embedding of $G$. In general, different agrarian embeddings will lead to differing values for the associated agrarian invariants, which makes it important to keep track of the embedding used to define them.

\subsubsection*{Applications}
We introduce the theory of agrarian invariants not only for its general appeal, but also with specific applications in mind.

For our first application, we turn our attention to agrarian groups of deficiency $1$. These groups were already investigated in~\cite[Section~5.6]{Kielak2018}, where it was shown that for such a group $G$ there exists a marked polytope determining the BNS invariant $\Sigma^1$ of $G$, which is a subset of $H^1(G;\RR)\setminus\{0\}$ and can be defined for general finitely generated groups.
Here we prove that the polytope can be replaced by an agrarian polytope endowed with a marking of a much more rigid character.
This result constitutes progress towards answering~\cite[Question~9.4]{FT2015} in the following way:

\begin{mainthm}[\ref{theo:deficency:marked}]
Let $G$ be a $D$-agrarian group of deficiency 1. There exists a marking of the vertices of the agrarian polytope $\Pag{D_r}(G)$ such that for every $\phi \in H^1(G;\RR) \smallsetminus \{0\}$ we have $\phi \in \Sigma^1(G)$ if and only if $\phi$ is marked.
\end{mainthm}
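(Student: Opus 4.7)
The plan is to adapt the strategy used by the second author in \cite[Section~5.6]{Kielak2018}, where an analogous statement was proved for a non-canonical marked polytope, and to show that the agrarian polytope $\Pag{D_r}(G)$ itself carries a natural marking satisfying the same property. The cleaner algebraic nature of $\Pag{D_r}(G)$, together with the rationality machinery developed earlier in the paper, is what makes it possible to upgrade the previous marked polytope to $\Pag{D_r}(G)$.

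First, I would fix a deficiency $1$ presentation $\langle x_1, \dots, x_n \mid r_1, \dots, r_{n-1}\rangle$ of $G$ and consider its presentation $2$-complex $X$; the universal cover $\widetilde X$ is a finite free $G$-CW complex. A rank count using that $\ZZ G$ embeds into the rationalised skew field $D_r$ shows that the $D_r$-agrarian Betti numbers of $\widetilde X$ vanish in all positive degrees, so the agrarian torsion $\torsionag[D_r](\widetilde X)$ is defined. Using Fox calculus, the middle differential of $\widetilde X$ is represented by the Jacobian $A = (\partial r_i/\partial x_j) \in M_{(n-1)\times n}(\ZZ G)$; deleting a column indexed by a generator $x_k$ whose abelianised image is primitive yields a square matrix whose Dieudonné determinant represents $\torsionag[D_r](\widetilde X)$ up to a unit contributed by the term $x_k - 1$ in degree one.

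Second, by rationality the Dieudonné determinant above lies in the Ore localisation of a twisted group ring of the free part $H$ of the abelianisation of $G$, and can therefore be written as a fraction $p/q$. By construction, the agrarian polytope $\Pag{D_r}(G)$ is the formal difference $P(p) - P(q)$ of the Newton polytopes of $p$ and $q$ in $H \otimes_\ZZ \RR$. I would mark a vertex $v$ of $\Pag{D_r}(G)$ exactly when, picking any class $\phi$ whose maximum on the appropriate Newton polytope is attained at the support of $v$, the $\phi$-leading term of $p$ (respectively $q$) projects to an invertible element of $D_r$. The resulting marking depends only on $v$, not on the choice of $\phi$, because the support of the $\phi$-leading term only depends on the extremal face carrying $v$.

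Third, I would establish the equivalence with membership in $\Sigma^1(G)$ by invoking an agrarian version of the Bieri--Neumann--Strebel--Sikorav criterion, identifying $\phi \in \Sigma^1(G)$ with the invertibility of the Fox Jacobian after passage to an agrarian Novikov-type completion in the direction $-\phi$. The direction ``marked implies $\phi \in \Sigma^1(G)$'' is then the more formal one: a marked vertex exhibits an explicit leading coefficient which is a unit in $D_r$, and the usual geometric-series argument produces an inverse of $A$ over the agrarian Novikov completion. The main obstacle will be the converse: given $\phi \in \Sigma^1(G)$, one must produce a vertex of $\Pag{D_r}(G)$ marked in the direction of $\phi$. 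The difficulty is to rule out cancellations between the $\phi$-leading terms of $p$ and $q$ that would obscure invertibility at the level of the polytope. I expect this to follow by tracking the $\phi$-filtration through the Ore localisation together with the Dieudonné-determinant rigidity already exploited in the construction of $\Pag{D_r}(G)$, mirroring the classical $L^2$-argument of \cite{Kielak2018} with every step involving the Linnell skew field replaced by its agrarian analogue over $D_r$.
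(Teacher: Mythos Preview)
There is a genuine gap at the very first step. You assert that ``a rank count using that $\ZZ G$ embeds into $D_r$ shows that the $D_r$-agrarian Betti numbers of $\widetilde X$ vanish in all positive degrees.'' A rank count only gives $\chi(\widetilde X/G)=0$, and together with $\bag[D_r]_0(\widetilde X)=0$ (from \cref{theo:betti:props:zeroth}) this yields $\bag[D_r]_1(\widetilde X)=\bag[D_r]_2(\widetilde X)$, not that both vanish. Equivalently, you would need the $(n-1)\times n$ Fox Jacobian to have full row rank over $D_r$, and nothing you have written forces this. The paper obtains $\bag[D_r]_1(G)=0$ by an entirely different mechanism: assuming $\Sigma^1(G)\neq\emptyset$, it picks an integral character in $\Sigma^1(G)$, realises $G$ as an ascending HNN extension with finitely generated base via \cref{prop:bns:hnn}, and then invokes the mapping-torus vanishing theorem (\cref{theo:betti:torus}), which crucially uses the rationality of the agrarian map $\ZZ G\to D_r$. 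The case $\Sigma^1(G)=\emptyset$ is disposed of with the trivial marking.

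A second, related gap is that you never argue that the presentation $2$-complex is aspherical, so there is no reason the torsion of $\widetilde X$ computes $\Pag{D_r}(G)$, which by definition is the polytope of a model for $EG$. In the paper this asphericity is deduced \emph{from} the vanishing of $\bag[D_r]_1$: once that is known, the boundary map $c_3\colon C_3\to C_2$ in any finite classifying space extending $X$ must vanish over $D_r$ for dimension reasons, hence already over $\ZZ G$, so the $2$-skeleton is itself a model for $EG$. Without the vanishing of $\bag[D_r]_1$ you have neither $D_r$-acyclicity nor a model for $EG$, and the rest of your argument cannot get started. Your remaining outline (marking via $\phi$-leading coefficients and a Sikorav--Novikov criterion) is in the spirit of \cite{Kielak2018} and broadly compatible with what the paper does through the submatrices $A_i$ and \cite[Lemma~5.12]{Kielak2018}, but the acyclicity and asphericity issues must be addressed first.
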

We refer to \cref{marked def} for details on how the marking on the polytope yields a marking of classes in the first cohomology.

\smallskip

In \cite{FT2015}, Friedl--Tillmann assigned a marked polytope to a fixed presentation of a torsion-free two-generator one-relator group.
By recognizing this polytope as an agrarian polytope, we are able to prove that their construction does in fact not depend on the chosen presentation.
We can also relate the thickness of the polytope in a given direction to another agrarian invariant, which in the case of two-generator one-relator groups will turn out to compute a measure of complexity for possible HNN splittings of the group. The proofs of these two facts are the contents of \cite{HennekeKielak2019a} by the authors.

\smallskip
Let us also mention another connection of interest: in the setting of $3$-manifolds, the $L^2$-torsion polytope was shown in~\cite{FL2017} to coincide with the Thurston polytope, that is the dual of the unit ball of the Thurston norm. This point of view allowed Funke and the second author to define the Thurston norm in the setting of free-by-cyclic groups~\cite{FunkeKielak2018}. Since the $L^2$-torsion polytope is a particular example of an agrarian polytope, one can expect that the theory of agrarian invariants will be useful in defining further generalisations of the Thurston norm.

The Thurston polytope is also closely related to the BNS invariants via~\cite[Theorem~5]{Thurston1986} and~\cite[Corollary~F]{Bierietal1987}. A new proof of this relation, essentially using agrarian methods, was given by the second author in~\cite{Kielak2018}.

\subsection*{Acknowledgements}
The first author would like to thank his advisor Wolfgang Lück as well as Stefan Friedl and Xiaolei Wu for helpful discussions. He is especially grateful to Stefan Friedl for an invitation to Regensburg and the hospitality experienced there.

The present work is part of the first author's PhD project at the University of Bonn. He was supported by Wolfgang Lück's ERC Advanced Grant ``KL2MG-interactions'' (no. 662400) granted by the European Research Council. The second author was supported by the grant KI 1853/3-1 within the Priority Programme 2026 \href{https://www.spp2026.de/}{`Geometry at Infinity'} of the German Science Foundation (DFG).

\section{Agrarian groups and Ore fields}

In the following, all rings are associative, unital, and not necessarily commutative; ring homomorphisms preserve the unit.
Modules are understood to be left modules unless specified otherwise. As a general exception, we consider a ring $R$ to be an $R$-$R$-bimodule.

Let $G$ be a group and denote by $\ZZ G$ the integral group ring of $G$.
\begin{defi}
    Let $G$ be a group.
    A ring homomorphism $\alpha\colon \ZZ G\to D$ to a skew field $D$ is called an \emph{agrarian map for} $G$.
    A morphism between two agrarian maps is an inclusion of skew fields that together with the maps from $\ZZ G$ forms a commutative triangle.
\end{defi}

While many formal properties of the agrarian Betti numbers we will introduce below hold in the situation of an arbitrary agrarian map, concrete calculations and definitions of higher invariants usually require the map to be injective:

\begin{defi}
    Let $G$ be a group.
    An \emph{agrarian embedding for} $G$ is an injective agrarian map.
    If $G$ admits an agrarian embedding (into a skew field $D$), it is called a \emph{($D$-)agrarian group}.
\end{defi}

An agrarian group $G$ is necessarily torsion-free; also, it satisfies the Kaplansky zero divisor conjecture, that is, $\ZZ G$ has no non-trivial zero divisors.

At present, there are no known torsion-free examples of groups which are not agrarian.
There is however a plethora of positive examples of agrarian groups:
If a torsion-free group satisfies the Atiyah conjecture over $\QQ$, then its group ring embeds into the skew field $\linnelld(G)=\linnelld(G;\QQ)$ of Linnell (by the same argument as used in~\cite{Luck2002}*{Theorem~10.39}), and hence the group is agrarian. Similarly, any biorderable group is agrarian, since its integral group ring embeds into the Malcev--Neumann completion, which is a skew field by~\cites{Malcev1948,Neumann1949} (see also~\cite{Cohn1995}*{Theorem~2.4.5}).

Agrarian groups also enjoy a number of convenient inheritance properties. For a more substantial discussion, see~\cite{Kielak2018}*{Section 4} by the second author.
One example of agrarian groups which does not feature in the discussion in~\cite{Kielak2018} is that of torsion-free one-relator groups. All such groups have been shown to be agrarian by Lewin--Lewin~\cite{LL1978}.

\subsection{Twisted group rings} We now recall the construction of twisted group rings, which will later allow us to obtain from an arbitrary agrarian map an agrarian map into a localised twisted polynomial ring.

\begin{defi}
Let $R$ be a ring and let $G$ be a group. Let functions $c\colon G\to\Aut(R)$ and $\tau \colon G\times G\to R^\times$ be such that
\begin{align*}
    c(g)\circ c(g') &= c_{\tau(g, g')} \circ c(gg')\\
    \tau(g,g')\tau(gg', g'') &= c(g)(\tau(g', g''))\tau(g, g'g''),
\end{align*}
where $g,g', g''\in G$, and where $c_r\in \Aut(R)$ for $r\in R^\times$ denotes the conjugation map $x\mapsto rxr^{-1}$. The functions $c$ and $\tau$ are called \emph{structure functions}. We denote by $RG$ the free $R$-module with basis $G$ and write elements of $RG$ as finite $R$-linear combinations $\sum_{g\in G} \lambda_g\ast g$ of elements of $G$. When convenient, we shorten $1\ast g$ to $g$. The structure functions endow $RG$ with the structure of an (associative) \emph{twisted group ring} by declaring
\[
g \cdot (r \ast 1) = c(g)(r) \ast g \textrm{ and }
g \cdot g'= \tau(g,g') \ast gg'
\]
and extending linearly.
\end{defi}

The usual, \emph{untwisted} group ring corresponds to the special case of trivial structure functions. In the following, group rings with $R=\ZZ$ will always be understood to be untwisted.

The fundamental example of a twisted group ring arises in the following way:
\begin{exam}
\label{exam:agrarian:twisted}
Let $\phi\colon G\twoheadrightarrow H$ be a group epimorphism with kernel $K$. We choose any set-theoretic section $s\colon H\to G$, i.e., a map between the underlying sets such that $\phi \circ s = \id_H$. We denote by $(\ZZ K)H$ the twisted group ring defined by the structure functions $c(h)(r)=s(h)rs(h)^{-1}$ and $\tau(h, h')=s(h)s(h')s(hh')^{-1}$. The untwisted group ring $\ZZ G$ is then isomorphic to the twisted group ring $(\ZZ K)H$ via the map
\[g\mapsto \left(g \cdot (s\circ\phi)(g)^{-1}\right)\cdot\phi(g).\qedhere\]
\end{exam}

Observe that the structure maps of the twisted group ring $(\ZZ K) H$ we just defined depend on the section $s \colon H \to G$. The global ring structure however does not, since for any section the resulting twisted group ring is isomorphic (as a ring) to $\ZZ G$.

The construction of a twisted group ring out of a group epimorphism can be extended to agrarian maps. This technique is formulated in the following technical lemma, which will be our main source of twisted group rings:

\begin{lemm}
\label{lemm:agrarian:equivariant_embedding}
Let $\alpha\colon \ZZ G\to D$ be an agrarian map for a group $G$. Let $N\leq G$ be a normal subgroup and set $Q\coloneqq G/N$. Then $\alpha$ restricts to an agrarian map $\ZZ N \to D$ for $N$ that is equivariant with respect to the conjugation action of $G$. Moreover, for any section $s \colon Q \to G$ of the quotient map, $\alpha$ extends to a ring homomorphism
\[ (\ZZ N)Q\to DQ,\]
where $(\ZZ N)Q$ is as defined in \cref{exam:agrarian:twisted}, and $DQ$ is a twisted group ring with the same structure functions as $(\ZZ N)Q$.
The twisted group ring structure of $D Q$ is independent of the choice of the section $s$ up to isomorphism, and this isomorphism can be chosen to map $\sum_{q\in Q} u_q q$ to $\sum_{q\in Q} v_q q$ such that for every $q \in Q$, the elements $u_q$ and $v_q$ differ only by an element of $D^\times$.
\end{lemm}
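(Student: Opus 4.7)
The plan is to exploit the fact that $\alpha$ is a ring homomorphism and sends each $g\in G$ to a unit $\alpha(g)\in D^\times$ (because $\alpha(g)\alpha(g^{-1})=\alpha(1)=1$), so that every algebraic identity valid in $\ZZ G$ transports to one in $D$. Each of the three assertions then follows by first proving the corresponding statement for the isomorphic copy $(\ZZ N)Q$ of $\ZZ G$ from \cref{exam:agrarian:twisted} and pushing it forward along $\alpha$.

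For the restriction and equivariance, $\ZZ N$ is a subring of $\ZZ G$, so $\alpha|_{\ZZ N}$ is a ring homomorphism into the skew field $D$ and hence an agrarian map for $N$. Applying $\alpha$ to the equality $gng^{-1}\in\ZZ N$ (for $g\in G$, $n\in N$) yields $\alpha(gng^{-1}) = \alpha(g)\alpha(n)\alpha(g)^{-1}$, which is exactly $G$-equivariance with respect to the conjugation actions on $\ZZ N$ and (through $\alpha$) on $D$.

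For the extension, I would fix a section $s\colon Q\to G$ and recall from \cref{exam:agrarian:twisted} that the structure functions on $(\ZZ N)Q$ are $c(q)(r) = s(q)rs(q)^{-1}$ and $\tau(q,q') = s(q)s(q')s(qq')^{-1}$. I would then define structure functions on $DQ$ by the same formulas, with $s(q)$ replaced by $\alpha(s(q))\in D^\times$; the required cocycle identities are the $\alpha$-images of those for $(\ZZ N)Q$, hence automatically satisfied. Finally, I would define $\tilde\alpha\colon (\ZZ N)Q \to DQ$ by $\sum r_q\ast q \mapsto \sum \alpha(r_q)\ast q$ and verify the ring homomorphism property by comparing products of basis elements, which match by construction of the structure functions on $DQ$.

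For the independence, given a second section $s'$, I would set $n_q \coloneqq s(q)s'(q)^{-1}\in N$. The two maps $\Phi_s, \Phi_{s'}\colon \ZZ G \to (\ZZ N)Q$ produced by \cref{exam:agrarian:twisted} are both ring isomorphisms, so $\Phi_{s'}\circ\Phi_s^{-1}$ is a ring isomorphism between the two twisted presentations, and a direct computation shows that it acts on basis elements as $r\ast q\mapsto r n_q \ast q$. Applying $\alpha$ coefficient-wise lifts this to the required isomorphism between the two versions of $DQ$ sending $\sum u_q\ast q$ to $\sum u_q \alpha(n_q)\ast q$, with correction factor $\alpha(n_q)\in D^\times$ yielding exactly the relation between $u_q$ and $v_q$ asserted by the lemma. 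The main obstacle is not conceptual but organisational: one must carefully align the structure functions on $DQ$ with those of $(\ZZ N)Q$ and keep track of which isomorphism is associated with which section in the final step.
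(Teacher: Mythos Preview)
Your treatment of the restriction/equivariance and of the extension $(\ZZ N)Q\to DQ$ is essentially identical to the paper's. The one point that deserves comment is your independence argument.

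Your idea of obtaining the isomorphism on the $(\ZZ N)Q$ level as $\Phi_{s'}\circ\Phi_s^{-1}$ is nice: it makes the formula $r\ast q\mapsto r\,n_q\ast q$ with $n_q=s(q)s'(q)^{-1}$ drop out immediately, and it is automatically a ring isomorphism there because it is a composite of ring isomorphisms. However, the sentence ``Applying $\alpha$ coefficient-wise lifts this to the required isomorphism between the two versions of $DQ$'' hides a genuine verification. The extensions $\tilde\alpha_s\colon(\ZZ N)_sQ\to D_sQ$ are not surjective, so the ring homomorphism property of the lifted map $\Psi\colon u\ast q\mapsto u\,\alpha(n_q)\ast q$ on $DQ$ does not follow formally from the $(\ZZ N)Q$ case: a general coefficient $u'\in D$ is not of the form $\alpha(r')$. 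Concretely, multiplicativity of $\Psi$ on $(1\ast q)(u'\ast q')$ amounts to an identity of the shape $\alpha(s(q))\,u'\,X=\alpha(n_q s'(q))\,u'\,Y$ with $X,Y\in\alpha(G)$. Once you note $n_q s'(q)=s(q)$ in $G$, the two prefactors of $u'$ agree, and then specialising $u'=1$ (which is covered by your $(\ZZ N)Q$ argument) gives $X=Y$, whence the identity for all $u'\in D$. So the gap is easily closed, but it should be said.

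By comparison, the paper simply writes down the map $\Phi\colon \sum u_q\ast q\mapsto \sum u_q\,s_1(q)s_2(q)^{-1}\ast q$ and checks multiplicativity on $q\cdot q'$ and $q\cdot(u\ast 1)$ by four explicit computations in $D$. Your route is more conceptual in deriving the formula and in reducing the $D$-level check to a single cancellation plus the already-known $(\ZZ N)Q$ case; the paper's route is longer but entirely self-contained. Either way the content is the same.
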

\begin{proof}
By definition, $\alpha$ restricts to an agrarian map for $N$.
Note that an element $g\in G$ acts on $D$ by conjugation with $\alpha(g)$, which is always invertible in $D$ since $g$ is invertible in $\ZZ G$.
Since $N$ is normal in $G$, the conjugation action of $G$ on $\ZZ G$ preserves $\ZZ N$ and hence induces an action on $\ZZ N$.
The restricted agrarian map $\alpha\colon\ZZ N\to D$ is equivariant with respect to these actions by construction.

Let $s\colon Q\to G$ be a set-theoretic section of the group epimorphism $\pr\colon G\to G/N = Q$, i.e., a map that satisfies $\pr\circ s=\id_Q$. Then every $q\in Q$ defines an automorphism $c_{s(q)}$ of $\ZZ N$ given by conjugation by $s(q)$. We denote by $(\ZZ N)Q$ the twisted group ring associated to the section $s$ as in~\cref{exam:agrarian:twisted}. Since the automorphism $c_{s(q)}$ of $\ZZ N$ extends to an automorphism of $D\supset \ZZ N$, which is given by conjugation by $c_{\alpha(s(q))}$, we can apply the twisted group ring construction also to $D$ and $Q$ in such a way that the map $(\ZZ N)Q\hookrightarrow DQ$ extends $\ZZ N\hookrightarrow D$.

\smallskip

Let $s_1$ and $s_2$ be two set-theoretic sections of $\pr\colon G\to Q$. Denote by $D_{s_1}Q$ and $D_{s_2}Q$ the associated twisted group ring structures on $DQ$. We claim that the map $\Phi\colon D_{s_1}Q\to D_{s_2}Q$ given by
\[ \sum_{q\in Q} u_q \ast q\mapsto \sum_{q\in Q} (u_q s_1(q) s_2(q)^{-1}) \ast q\]
is a ring isomorphism. Since $s_1(q) s_2(q)^{-1}\in G\subset D^\times$ for all $q\in Q$, it is clear that $\Phi$ is an isomorphism between the underlying free $D$-modules and changes the coefficients by a unit in $D$ only.

It thus remains to check that $\Phi$ respects the ring multiplications, which is fully determined by the cases $q\cdot u$ and $q\cdot q'$ for $q,q'\in Q$ and $u\in D$. We denote by $\tau_i$ for $i=1,2$ the respective structure function of $D_{s_i}Q$ and compute that

\begin{align*}
\Phi(q\cdot q') &= \Phi(\tau_1(q, q')\ast qq') \\
&= \Phi(s_1(q)s_1(q')s_1(qq')^{-1} \ast qq')\\
&= s_1(q)s_1(q')s_1(qq')^{-1}s_1(qq')s_2(qq')^{-1} \ast qq'\\
&= s_1(q)s_1(q')s_2(qq')^{-1} \ast qq'
\end{align*}
\begin{align*}
\Phi(q)\cdot \Phi(q') &= (s_1(q)s_2(q)^{-1}\ast q)\cdot (s_1(q')s_2(q')^{-1}\ast q') \\
&= (s_1(q)s_2(q)^{-1} s_2(q)s_1(q')s_2(q')^{-1}s_2(q)^{-1} \ast q)\cdot q'\\
&= s_1(q)s_1(q')s_2(q')^{-1}s_2(q)^{-1}s_2(q)s_2(q')s_2(qq')^{-1}\ast qq'\\
&= s_1(q)s_1(q')s_2(qq')^{-1} \ast qq'
\end{align*}
and
\begin{align*}
\Phi(q\cdot u\ast 1) &= \Phi(s_1(q) u s_1(q)^{-1} \ast q \cdot 1\ast 1)\\
&= \Phi(s_1(q) u s_1(q)^{-1} s_1(q) s_1(1) s_1(q)^{-1} \ast q)\\
&= \Phi(s_1(q) u s_1(1) s_1(q)^{-1} \ast q)\\
&= s_1(q) u s_1(1) s_1(q)^{-1} s_1(q) s_2(q)^{-1} \ast q\\
&= s_1(q) u s_1(1) s_2(q)^{-1} \ast q
\end{align*}
\begin{align*}
\Phi(q)\cdot \Phi(u\ast 1) &= (s_1(q)s_2(q)^{-1}\ast q)\cdot (u s_1(1)s_2(1)^{-1}\ast 1)\\
&= (s_1(q)s_2(q)^{-1}s_2(q)u s_1(1)s_2(1)^{-1}s_2(q)^{-1} \ast q) \cdot 1\\
&= s_1(q) u s_1(1)s_2(1)^{-1}s_2(q)^{-1} s_2(q)s_2(1)s_2(q)^{-1} \ast q\\
&= s_1(q) u s_1(1)s_2(q)^{-1} \ast q.
\end{align*}
Hence $\Phi$ is an isomorphism of rings.
\end{proof}

\subsection{The rationalisation of an agrarian map}
We now consider the case of an agrarian map $\alpha\colon\ZZ G\to D$ and a normal subgroup $K\leq G$ such that $G/K$ is a finitely generated free abelian group $H$. \Cref{lemm:agrarian:equivariant_embedding} then provides us with a twisted group ring $DH$ and a ring homomorphism $\ZZ G \cong (\ZZ K) H \to DH$. Since $H$ is free abelian, it is in particular biorderable and hence $DH$ contains no non-trivial zero divisors (this is a standard fact following from the existence of an embedding of $D H$ into its Malcev--Neumann completion; for details see
~\cite[Theorem~4.10]{Kielak2018}). It then follows from~\cite[Theorem~2.11]{Kielak2018}, an extension of a result of Tamari~\cite{Tamari1954} to twisted skew field coefficients, that $DH$ satisfies the Ore condition, and thus its Ore localisation is a skew field.

Recall that a ring $R$ without non-trivial zero divisors satisfies the \emph{Ore condition} if for every $p,q \in R$ with $q \neq 0$ there exists $r,s \in R$ with $s \neq 0$ such that
\[
ps = qr.
\]
This equality allows for the conversion of a left fraction $ q^{-1} p$ into a right fraction $r s^{-1}$, which in turn makes it possible to multiply fractions (in the obvious way). The Ore condition also facilitates the existence of common denominators, and thus allows for addition of fractions. Thanks to these properties, the ring $R$ embeds into its \emph{Ore localisation} \[\Ore(R)\coloneqq\{ q^{-1} p \mid p,q \in R, q \neq 0 \},\]
which is evidently a skew field.
For details see the book of Passman~\cite[Section 4.4]{Passman1985}.

Our construction is summarised in
\begin{defi}
\label{defi:agrarian:ore_field}
Let $\alpha\colon \ZZ G\to D$ be an agrarian map for a group $G$. Let $K$ be a normal subgroup of $G$ such that $H\coloneqq G/K$ is finitely generated free abelian. The \emph{$K$-rationalisation} of $\alpha$ is the composition
\[\ZZ G\cong (\ZZ K)H\to DH \hookrightarrow \Ore(DH),\]
where $\Ore(DH)$ is the Ore field of fractions of the twisted group ring $DH$ of \cref{lemm:agrarian:equivariant_embedding}.
\end{defi}

The construction of the $K$-rationalisation of an agrarian map $\ZZ G\to D$ of course depends on a choice of a set-theoretic section of the projection $G\to G/K$, which we will assume to be fixed once and for all for any group $G$ being considered.
By \cref{lemm:agrarian:equivariant_embedding}, at least the target skew field of the $K$-rationalisation is independent of this choice up to isomorphism.

Also note that the $K$-rationalisation of an agrarian embedding is again an embedding.

The typical situation in which we consider the $K$-rationalisation of a given agrarian map is that where $K$ is the kernel of the projection of $G$ onto the free part of its abelianisation.
\begin{defi}
\label{defi:agrarian:rational}
Let $G$ be a finitely generated group and let $\alpha\colon \ZZ G\to D$ be an agrarian map for $G$.
Denote the free part of the abelianisation of $G$ by $H$ and the kernel of the canonical projection of $G$ onto $H$ by $K$.
For this particular choice of $K$, we simply call the $K$-rationalisation of $\alpha$ the \emph{rationalisation}.

An agrarian map $\beta \colon \ZZ G \to E$ will be called \emph{rational} if there exists a skew field embedding $\iota \colon E \to F$ such that $\iota \circ \beta$ is the rationalisation of $\beta$.
\end{defi}
The term \emph{rational} is chosen to indicate that the target of a rational agrarian map should be viewed as a skew field of rational functions in finitely many variables with coefficients in a skew field.
While the special structure of rational functions is crucial for the development of the theory of agrarian invariants, the specific choice of the skew field of coefficients is mostly immaterial.

\section{Agrarian Betti numbers}
From now on, $G$ will denote a group with a fixed agrarian map $\alpha\colon\ZZ G\to D$. Unless indicated explicitly, all tensor products will be taken over $\ZZ G$.

\subsection{Definition of agrarian Betti numbers}
Let $C_*$ be a $\ZZ G$-chain complex and let $n\in \ZZ$. Viewing $D$ as a $D$-$\ZZ G$-bimodule via the agrarian map $\alpha$, the chain complex $D\otimes C_*$ becomes a $D$-chain complex. Since $D$ is a skew field, the $D$-module $H_p(D\otimes C_*)$ is free and we can consider its dimension $\dim_D H_p(D\otimes C_*)\in \NN \sqcup \{\infty\}$. This leads to the following definition of agrarian Betti numbers:

\begin{defi}
Let $G$ be a group with an agrarian map $\alpha\colon\ZZ G\to D$ and $C_*$ a $\ZZ G$-chain complex. For $n\in \ZZ$, the \emph{$n$-th $D$-Betti number} of $C_*$ with respect to $\alpha$ is defined as
\[\bag_n(C_*)\coloneqq \dim_D H_n(D\otimes C_*)\in \NN \sqcup \{\infty\},\]
A $\ZZ G$-chain complex is called \emph{$D$-acyclic} if all of its $D$-Betti numbers are equal to $0$.
\end{defi}

If the agrarian map $\alpha$ is chosen to be the augmentation homomorphism
\[\ZZ G\to \ZZ \hookrightarrow \QQ,\]
the associated agrarian Betti numbers of a $\ZZ G$-chain complex $C_*$ reduce to the ordinary Betti numbers of the quotient complex $C_*/G$.
In the special case where $G$ satisfies the Atiyah conjecture and the agrarian map is chosen to be the agrarian embedding $\ZZ G\hookrightarrow \linnelld(G)$, the $D$-Betti numbers of any $\ZZ G$-chain complex $C_*$ agree with its $L^2$-Betti numbers $\bl_*(C_*;G)$ by~\cite[Theorem~3.6~(2)]{FL2016}. Note that the assumption that $C_*$ is projective is not used in the proof, the theorem thus holds for arbitrary $\ZZ G$-chain complexes.

We will mainly be concerned with the agrarian Betti numbers assigned to \emph{$G$-CW-complexes}, which are equivariant analogues of CW-complexes and very convenient models for $G$-spaces. A typical example of a $G$-CW-complex is the universal covering of a connected CW-complex $X$ with $G=\pi_1(X)$.

\begin{defi}
Let $G$ be a (discrete) group. A \emph{$G$-CW-complex} is a CW-complex $X$ together with an implicit (left) $G$-action mapping $p$-cells to $p$-cells and such that any cell mapped into itself is fixed pointwise by the action. An action satisfying these properties is called \emph{cellular}. The \emph{$p$-skeleton} of a $G$-CW-complex $X$, denoted by $X_p$, is the $p$-skeleton of the underlying CW-complex together with the restriction of the $G$-action. If $X=X_p$ for some $p$ and $p$ is minimal with this property, then $X$ is said to be \emph{of dimension $p$}. Any $G$-orbit of a $p$-dimensional cell of the underlying CW-complex constitutes a \emph{$p$-dimensional $G$-cell} of $X$. A $G$-CW-complex is \emph{connected} if the underlying CW-complex is connected. The \emph{cellular ($\ZZ G$-)chain complex} $C_*(X)$ is obtained from the cellular chain complex of the underlying CW-complex by considering the induced left action by $G$. All differentials are $\ZZ G$-linear.
\end{defi}

\begin{defi}
A $G$-CW-complex $X$ is called \emph{free} if its $G$-action is free. It is called \emph{of finite type} if for every $p\geq 0$ there are only finitely many $p$-dimensional $G$-cells in $X$. If the total number of $G$-cells of any dimension in $X$ is finite, the $G$-CW-complex $X$ is called \emph{finite}.
\end{defi}
\begin{defi}
A $\ZZ G$-chain complex $C_*$ is called \emph{free} (\emph{of finite type}) if $C_n$ is a free (finitely generated) $\ZZ G$-module for every $n\in\ZZ$. It is called \emph{bounded} if there is $N\in\NN$ such that $C_i=0$ for $i>N$ and $i<-N$.
\end{defi}

If a $G$-CW-complex is free (of finite type), then its associated cellular chain complex $C_*(X)$ is free (of finite type). If it is finite, then its cellular chain complex is bounded and of finite type.

We now define agrarian Betti numbers for $G$-CW-complexes:

\begin{defi}
Let $X$ be an $G$-CW-complex. For $p\geq 0$, the \emph{$p$-th $D$-Betti number} of $X$ with respect to $\alpha$ is defined as
\[\bag_p(X)\coloneqq \bag_p(C_*(X)) \in \NN \sqcup \{\infty\}.\]
A $G$-CW-complex $X$ is called \emph{$D$-acyclic} if all of its $D$-Betti numbers are equal to $0$.
\end{defi}

If $X$ is an $G$-CW-complex of finite type, then the $D$-Betti numbers of $X$ will all be non-negative integers.

\subsection{Dependence on the agrarian map}
Note that the agrarian Betti numbers may depend not only on the skew field $D$ but also on the particular choice of agrarian map.
We will first consider a typical situation in which two different agrarian maps define the same agrarian Betti numbers.

\begin{rema}
\label{rema:betti:epic}
Recall that a ring homomorphism $f\colon R\to S$ is called \emph{epic} if $g_1\circ f=g_2\circ f$ implies $g_1=g_2$ for any ring homomorphisms $g_1, g_2\colon S\to T$. Any agrarian map $\alpha\colon\ZZ G\to D$ factors in a unique way through an epic agrarian map, namely the map to the skew subfield $D'$ of $D$ generated by $\alpha(\ZZ G)$.
Now any $D'$-module, and in particular the overfield $D$, is flat as a right module over the skew field $D'$. Hence, we get
\begin{align*}
\dim_D H_n(D\otimes C_*)&=\dim_D H_n(D\otimes_{D'} D'\otimes C_*)\\
&=\dim_D D\otimes_{D'} H_n(D'\otimes C_*)\\
&=\dim_{D'} H_n(D'\otimes C_*)
\end{align*}
for any $\ZZ G$-chain complex $C_*$ and $n\in\ZZ$, i.e., $D$ and $D'$ yield the same agrarian Betti numbers. We can thus restrict our attention to epic agrarian maps when computing agrarian Betti numbers.
\end{rema}

\begin{rema}
\label{rema:betti:epic_rational}
Let $G$ be a finitely generated group and $\alpha\colon\ZZ G\to D$ a rational agrarian map for $G$.
If we replace $D$ by the skew subfield generated by $\alpha(\ZZ G)$, then the resulting epic agrarian map will still be rational.
In fact, if we denote the free part of the abelianisation of $G$ by $H$ and the kernel of the projection of $G$ onto $H$ by $K$, then the skew subfield of $D$ generated by $\alpha(\ZZ G)$ is $\Ore(D'H)$, where $D'$ is the skew subfield of $D$ generated by $\alpha(\ZZ K)$.
\end{rema}

The epic agrarian map $\ZZ G\to\ZZ \to \QQ$ given by the augmentation homomorphism always produces positive zeroth agrarian Betti numbers, whereas the zeroth agrarian Betti numbers with respect to any other epic agrarian map is always zero, as we will see in \cref{theo:betti:props:zeroth}.
For agrarian embeddings, the situation is as follows:
\begin{prop}
\label{prop:betti:independence}
Let $G$ be a finitely generated agrarian group. The agrarian Betti numbers for any connected finite free $G$-CW-complex are independent of the choice of agrarian embedding if and only if there exists an epic agrarian embedding for $G$ that is unique up to isomorphism.
\end{prop}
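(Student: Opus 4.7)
The proof splits into the two implications, with the reverse direction being essentially formal.

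For $(\Leftarrow)$, the plan is to appeal to \cref{rema:betti:epic}. Any agrarian embedding $\alpha\colon\ZZ G\to D$ factors through an epic agrarian embedding $\alpha'\colon\ZZ G\to D'$, and the flatness argument in that remark shows that $\alpha$ and $\alpha'$ produce identical Betti numbers on every $\ZZ G$-chain complex. If all epic agrarian embeddings are isomorphic, these numbers are therefore independent of $\alpha$.

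For $(\Rightarrow)$, I will argue by contrapositive, assuming two non-isomorphic epic agrarian embeddings $\alpha_i\colon\ZZ G\to D_i$ ($i=1,2$) and producing a single connected finite free $G$-CW-complex whose $D_1$- and $D_2$-Betti numbers disagree. The key algebraic input will be Cohn's classification of epic $R$-fields by Sylvester matrix rank functions: non-isomorphic epic maps to skew fields must disagree on the rank of at least one matrix. Applied to $R=\ZZ G$, this supplies a matrix $M\colon\ZZ G^a\to\ZZ G^b$ with $\rk_{D_1}\alpha_1(M)\neq\rk_{D_2}\alpha_2(M)$.

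To realise $M$ geometrically, take $Y$ to be the Cayley graph of $G$ with respect to some finite generating set $\{g_1,\dots,g_r\}$, a connected finite free $1$-dimensional $G$-CW-complex. Attaching $b$ equivariant $2$-cells to $Y$ with constant attaching maps at the basepoint yields a $G$-CW-complex $Y_2$ whose cellular chain complex satisfies $C_2(Y_2)=\ZZ G^b$ and $\partial_2=0$. The geometrically delicate step is then to attach $a$ further equivariant $3$-cell orbits whose collective cellular boundary equals $M$. Although $Y_2$ is typically not simply connected (its fundamental group is the kernel of the surjection $F_r\twoheadrightarrow G$), every $\ZZ G$-basis element of $C_2(Y_2)\cong\ZZ G^b$ is represented by the inclusion of one of the trivially attached $2$-spheres, and any finite $\ZZ G$-linear combination is realised by pinching $S^2$ to a wedge of spheres and mapping the summands onto the appropriate $2$-sphere components of $Y_2$ with suitable degrees. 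Equivariantly attaching $a$ such $3$-cells then produces a connected finite free $G$-CW-complex $X$ with $\partial_3=M$.

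A direct calculation of agrarian homology then gives $\bag_2(X)=b-\rk_D(\alpha(M))$ for any agrarian embedding $\alpha\colon\ZZ G\to D$, so by the choice of $M$ the second $D_1$- and $D_2$-Betti numbers of $X$ disagree, completing the contrapositive. The part I expect to require the most care in converting this plan into a complete proof is the geometric realisation step — the Hurewicz-type surjectivity statement that permits any element of $\ZZ G^b$ to arise as a cellular boundary despite $\pi_1(Y_2)$ being non-trivial — together with a precise invocation of Cohn's classification theorem in the form appropriate for $\ZZ G$.
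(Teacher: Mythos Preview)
Your proposal is correct and follows essentially the same strategy as the paper: both directions are argued identically, with $(\Leftarrow)$ via \cref{rema:betti:epic} and $(\Rightarrow)$ by the contrapositive, invoking Cohn's theory to obtain a matrix over $\ZZ G$ distinguishing the two epic skew fields and then realising it as the third cellular boundary map of a complex built by wedging $2$-spheres onto a Cayley graph. The only cosmetic difference is that the paper quotes Cohn's result in its full/non-full form (a matrix invertible over $D_1$ but singular over $D_2$), whereas you use the equivalent Sylvester rank function formulation; your variant has the mild advantage of yielding the explicit formula $\bag_2(X)=b-\rk_D(M)$ rather than just $0$ versus nonzero.
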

\begin{proof}
If there is a unique isomorphism type of epic agrarian embeddings for $G$, then by the preceding discussion every choice of an agrarian embedding factors through an epic agrarian embedding of this type and hence gives the same agrarian Betti numbers even for all $\ZZ G$-complexes.

Now let $\ZZ G\hookrightarrow D_1$ and $\ZZ G\hookrightarrow D_2$ be non-isomorphic epic agrarian embeddings. By~\cite[Theorem~4.3.5]{Cohn1995} there exists an $m\times n$-matrix $A$ over $\ZZ G$ which becomes invertible when viewed as a matrix over $D_1$, but becomes singular over $D_2$ (without loss of generality). We realise $A$ topologically by constructing the skeleta of a connected finite free $G$-CW-complex $X$ step by step as follows: Choose a generating set $S=\{x_1,\dots,x_k\}$ for $G$ and consider the Cayley graph $C(G, S)$ as a 1-dimensional $G$-CW-complex $X_1$. We now attach $m$ $2$-dimensional spheres to each vertex, and extend the action of $G$ in the obvious way; this way we arrive at the $2$-skeleton $X_2$ of $X$. For every $1\leq j \leq n$, we then attach a $3$-dimensional $G$-cell to $X_2$ in such a way that the resulting boundary map $C_3(X) \to C_2(X)$ in the cellular chain complex of the resulting space coincides with the matrix $A$. This concludes the construction of the connected finite free $G$-CW-complex $X$.

The cellular chain complex $C_*(X)$ is concentrated in degrees 0, 1, 2 and 3 and looks as follows:
\[\ZZ G^n\xrightarrow{A}\ZZ G^m\xrightarrow{0} \ZZ G^k \xrightarrow{(x_1-1, \dots, x_k-1)} \ZZ G\]
As $A$ becomes invertible over $D_1$, but singular over $D_2$, we have
\[H_2(X;D_1)=0 \neq H_2(X;D_2),\]
and hence the $D_1$- and $D_2$-Betti numbers of $X$ differ.
\end{proof}

Translated into our setting, in~\cite[Section~V]{Lewin1974}, Lewin constructs two non-isomorphic epic agrarian embeddings of $F_6$, the free group on six generators. Taking the previous lemma into consideration, we conclude that the notion of \emph{the} agrarian Betti numbers of an $F_6$-CW-complex is not well-defined. Nonetheless, we will later give examples of complexes for which the $D$-Betti numbers can be shown to not depend on $D$.

\subsection{Computational properties} In order to formulate and prove agrarian analogues of the properties of $L^2$-Betti numbers, as collected by Lück in~\cite[Theorem~1.35]{Luck2002}, we have to introduce a few classical constructions on $G$-CW-complexes and chain complexes.

Recall that for a free $G$-CW-complex $X$ and a subgroup $H\leq G$ of finite index, the $H$-space $\res_G^H X$ is obtained from $X$ by restricting the action to $H$. A free (finite, finite type) $H$-CW-structure for this space can be obtained from a free (finite, finite type) $G$-CW-structure of $X$ by replacing a $G$-cell with $\vert G:H\vert $ many $H$-cells.

If $H\leq G$ is any subgroup and $Y$ is a free $H$-CW-complex, then $G\times_H Y$ is the $H$-space $G\times Y/(g, y)\sim (gh^{-1}, hy)$. A free (finite, finite type) $H$-CW-structure of $Y$ determines a free (finite, finite type) $G$-CW-structure of $G\times_H Y$ by replacing an $H$-cell with a $G$-cell.

We now consider a chain complex $C_*$ with differentials $c_*$. Its \emph{suspension} $\Sigma C_*$ is the chain complex with $C_{n-1}$ as the module in degree $n$ and $n$-th differential equal to $-c_{n-1}$. If $f_*\colon C_*\to D_*$ is a chain map between chain complexes with differentials $c_*$ and $d_*$, the \emph{mapping cone} $\cone_*(f_*)$ is the chain complex with $\cone_n(f_*)=C_{n-1}\oplus D_n$ and $n$-th differential given by
\[C_{n-1}\oplus D_n\xrightarrow{\begin{pmatrix}-c_{n-1}&0\\f_{n-1}&d_n\end{pmatrix}} C_{n-2}\oplus D_{n-1}.\]
The mapping cone of $f_*$ fits into the following short exact sequence:
\[0\to D_*\to \cone_*(f_*) \to \Sigma C_*\to 0. \]

The following theorem covers all the properties of agrarian Betti number we will use in computations:
\begin{theo}
\label{theo:betti:props}
The following properties of $D$-Betti numbers hold, where we fix an agrarian map $\alpha\colon \ZZ G\to D$ for a group $G$:

\begin{enumerate}[label={(\arabic*)},ref={\thetheo~(\arabic*)}]
    \item \itemtitle{Homotopy invariance}
    \label[theoenum]{theo:betti:props:invariance}
    Let $f\colon X\to Y$ be a $G$-map of free $G$-CW-complexes of finite type. If the map $H_p(f;\ZZ)\colon H_p(X;\ZZ)\to H_p(Y;\ZZ)$ induced on cellular homology with integral coefficients is bijective for $p\leq d-1$ and surjective for $p=d$, then
    \begin{align*}
        \bag_p(X) &= \bag_p(Y)\quad \text{for }p\leq d-1;\\
        \bag_d(X) &\geq \bag_d(Y).
    \end{align*}
    In particular, if $f$ is a weak homotopy equivalence, we get for all $p\geq 0$:
    \[\bag_p(X)=\bag_p(Y).\]

    \item \itemtitle{Euler-Poincaré formula}
    \label[theoenum]{theo:betti:props:euler}
    Let $X$ be a finite free $G$-CW-complex. Let $\chi(X/G)$ be the Euler characteristic of the finite CW-complex $X/G$, i.e.,
    \[\chi(X/G)\coloneqq \sum_{p\geq 0} (-1)^p\cdot \beta_p(X/G), \]
    where $\beta_p(X/G)$ denotes the number of $p$-cells of $X/G$. Then
    \[\chiag(X)\coloneqq\sum_{p\geq 0}(-1)^p\cdot \bag_p(X) = \chi(X/G).\]

    \item \itemtitle{Upper bound}
    \label[theoenum]{theo:betti:props:bound}
    Let $X$ be a free $G$-CW-complex. With $\beta_p(X/G)$ as above, for all $p\geq 0$ we have
    \[\bag_p(X)\leq \beta_p(X/G).\]

    \item \itemtitle{Zeroth agrarian Betti number}
    \label[theoenum]{theo:betti:props:zeroth}
    Let $X$ be a connected free $G$-CW-complex of finite type and assume that the agrarian map $\alpha\colon\ZZ G\to D$ does not factor through the augmentation homomorphism $\ZZ G\to\ZZ\hookrightarrow\QQ$.
    Then
    \[\bag_0(X)=0.\]

    \item \itemtitle{Induction}
    \label[theoenum]{theo:betti:props:induction}
    Let $H\leq G$ be a subgroup of $G$.
    If $X$ is a free $H$-CW-complex, then for $p\geq 0$
    \[\bag_p(G\times_H X)=\bag_p (X),\]
    where the agrarian embedding for $H$ is chosen as the restriction of $\alpha$ to $\ZZ H$.

    \item \itemtitle{Amenable groups}
    \label[theoenum]{theo:betti:props:amenable}
    Let $X$ be a free $G$-CW-complex of finite type and assume that the agrarian map $\alpha\colon \ZZ G\to D$ is actually an agrarian embedding.
    Further assume that $G$ is amenable.
    Then
    \[\bag_p(X) = \dim_D(D\otimes H_p(X;\ZZ G)).\]
\end{enumerate}
\end{theo}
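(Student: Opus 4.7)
My plan is to treat each of the six properties separately, making essential use of the fact that $D$ is a skew field (so that every $D$-module has a well-defined dimension in $\NN \sqcup \{\infty\}$, additive on short exact sequences) and that the cellular chain complex $C_*(X)$ of a free $G$-CW-complex consists of free $\ZZ G$-modules. For (1), I pass to the mapping cone of $f_*$ and exploit the short exact sequence $0 \to C_*(Y) \to \cone_*(f_*) \to \Sigma C_*(X) \to 0$. The hypothesis on integer homology combined with the associated long exact sequence forces $H_p(\cone_*(f_*)) = 0$ for $p \leq d$. Any bounded chain complex of finitely generated free $\ZZ G$-modules with vanishing homology in degrees $p \leq d$ is chain homotopy equivalent, via successive splitting off of elementary acyclic summands, to a complex of projective $\ZZ G$-modules concentrated in degrees $> d$; at each stage the relevant short exact sequence splits because its right-hand term was shown to be projective at the previous stage. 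Tensoring this equivalence with $D$ yields $H_p(D \otimes_{\ZZ G} \cone_*(f_*)) = 0$ for $p \leq d$, and the cone's long exact sequence in $D$-homology (which remains exact because the original sequence is degree-wise split) then gives the stated isomorphisms and inequality. I expect this to be the main obstacle, being the only step that requires nontrivial homological manipulations.

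Parts (2), (3), and (5) follow from standard considerations. Using $D \otimes_{\ZZ G} C_p(X) \cong D^{\beta_p(X/G)}$, property (3) is immediate since $H_p(D \otimes_{\ZZ G} C_*(X))$ is a subquotient of this module. Property (2) follows from dimension additivity together with the identity $\sum_p (-1)^p \dim_D H_p = \sum_p (-1)^p \dim_D C_p$ for bounded finite-dimensional $D$-complexes, whose right-hand side equals $\chi(X/G)$. For (5), I identify $C_*(G \times_H X) \cong \ZZ G \otimes_{\ZZ H} C_*(X)$ as $\ZZ G$-chain complexes, after which associativity of the tensor product gives $D \otimes_{\ZZ G} C_*(G \times_H X) \cong D \otimes_{\ZZ H} C_*(X)$, so the equality of agrarian Betti numbers is direct.

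For (4), connectedness forces $H_0(C_*(X)) \cong \ZZ$ as a $\ZZ G$-module with augmentation action; right exactness of $D \otimes_{\ZZ G} -$ identifies $H_0(D \otimes_{\ZZ G} C_*(X))$ with $D \otimes_{\ZZ G} \ZZ$, which is the quotient of $D$ by the right ideal generated by $\{\alpha(g) - 1 : g \in G\}$. The hypothesis that $\alpha$ does not factor through the augmentation is equivalent to the existence of some $g \in G$ with $\alpha(g) \neq 1$; since $D$ is a skew field, $\alpha(g) - 1$ is then a unit and the quotient collapses to zero. For (6), the key point is that an amenable group whose integral group ring is a domain (automatic for an agrarian group) satisfies the Ore condition by Tamari's theorem, so that $\ZZ G$ embeds into its Ore field of fractions $\Ore(\ZZ G)$; since every nonzero element of $\ZZ G$ is already invertible in $D$, the agrarian embedding extends uniquely to an embedding $\Ore(\ZZ G) \hookrightarrow D$. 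As $\Ore(\ZZ G)$ is a flat $\ZZ G$-module (localisation being flat) and $D$ is free over $\Ore(\ZZ G)$, the composite $D$ is flat over $\ZZ G$; tensoring with $D$ therefore commutes with homology, and this yields the claimed identity.
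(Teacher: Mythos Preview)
Your proposal is correct and, for parts (1), (2), (3), (5), and (6), follows the paper's proof essentially verbatim: the same mapping-cone argument with the inductive replacement of $\cone_*(f_*)$ by a homotopy-equivalent complex vanishing in low degrees, the same Euler--Poincar\'e and subquotient observations, the same associativity-of-tensor identification for induction, and the same use of Tamari's theorem to exhibit $D$ as flat over $\ZZ G$ via an intermediate Ore field. One cosmetic point: in (1) you write ``bounded'' where ``bounded below'' is what is needed and what actually holds (the complexes are only of finite type), but your inductive splitting argument only uses bounded-belowness, so this does not affect correctness.

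The one genuine difference is in part (4). The paper first performs a geometric reduction---collapsing a maximal tree in $X/G$ to arrange that $X$ has a single equivariant $0$-cell---and then reads off the vanishing from the explicit form $\bigoplus_i (1-g_i)$ of the first differential. Your argument bypasses this entirely: you use right-exactness of $D\otimes_{\ZZ G}-$ to identify $H_0(D\otimes C_*(X))$ directly with $D\otimes_{\ZZ G}\ZZ$, and then observe that this quotient of $D$ is killed by any unit of the form $\alpha(g)-1$. This is shorter and more conceptual; the paper's reduction buys nothing extra here, so your route is a strict simplification.
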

\begin{proof}
\begin{enumerate}[label={(\arabic*)}]
    \item We replace $f$ by a homotopic cellular map. Consider the $\ZZ G$-chain map \[f_*\colon C_*(X)\to C_*(Y)\]
    induced by $f$ on the cellular chain complexes and its mapping cone $\cone_*(f_*)$, which fits into a short exact sequence
    \[0\to C_*(Y)\to \cone_*(f_*)\to \Sigma C_*(X)\to 0\]
     of $\ZZ G$-chain complexes. Applying the assumptions on the map $H_p(f;\ZZ)$ to the long exact sequence in homology associated to this short exact sequence, we obtain that $H_p(\cone_*(f_*))=0$ for $p\leq d$.
    \begin{clai}
        The homology of $D\otimes\cone_*(f_*)$ vanishes in degrees $p\leq d$.
    \end{clai}

    Assume for the moment that this indeed holds. Since $D\otimes\cone_*(f_*)= \cone_*(\id_D\otimes f_*)$ and $D\otimes\Sigma C_*(X)=\Sigma(D\otimes C_*(X))$, the short sequence
    \[0\to D\otimes C_*(Y)\to D\otimes\cone_*(f_*)\to D\otimes\Sigma C_*(X)\to 0\]
    is also exact. We now consider the associated long exact sequence in homology, in which the terms $H_p(D\otimes\cone_*(f_*))$ for $p\leq d$ vanish by the claim. The exactness of the sequence then implies that the differentials $H_p(D\otimes\Sigma C_*(Y))\xrightarrow{\cong} H_{p-1}(D\otimes C_*(X))$ are isomorphisms for $p\leq d$ and the differential $H_{d+1}(D\otimes\Sigma C_*(Y))\twoheadrightarrow H_d(D\otimes C_*(X))$ is an epimorphism. Applying $\dim_D$ and using the definition of the suspension then yields the desired statement.

    We are left with proving the claim. Since $\cone_*(f_*)$ is bounded below and consists of free modules, we can inductively construct a $\ZZ G$-chain homotopy equivalent $\ZZ G$-chain complex $Z_*$ which vanishes in degrees $p\leq d$. Tensoring with $D$ then yields a $D$-chain homotopy equivalence between $D\otimes\cone_*(f_*)$ and $D\otimes Z_*$. As $Z_p=0$ for $p\leq d$, the same holds true for $D\otimes Z_*$ and hence $H_p(D\otimes\cone_*(f_*))=H_p(D\otimes Z_*)=0$ for $p\leq d$.

    \item This is a consequence of two immediate facts: first, the Euler characteristic of a chain complex over a skew field does not change when passing to homology; second, we have the identity $\beta_p(X/G)=\dim_D D\otimes C_p(X)$.

    \item This holds since $H_p(X;D)$ is a subquotient of $D\otimes C_p(X)$ and the latter has dimension $\beta_p(X/G)$ over $D$ (as remarked above).

    \item If $X$ is empty, then the claim is trivially true. Otherwise, we will first argue that, without loss of generality, we may assume $X/G$ to have exactly one $0$-cell.  let $T$ be a maximal tree in the 1-skeleton of the CW-complex $X/G$ and denote by $q\colon X/G\to (X/G)/T$ the associated cellular quotient map, which is a homotopy equivalence. Note that $(X/G)/T$ has a single $0$-cell. Let $p\colon (X/G)/T \to X/G$ be a cellular homotopy inverse of $q$. We denote by $X'$ the total space in the following pullback of the $G$-covering $X\to X/G$ along $p$:
    \[
        \begin{tikzcd}
            X \arrow[d] & X' \arrow[d, dashed]\arrow[l, dashed]\\
            X/G \arrow[r, "q"] & (X/G)/T \arrow[l, "p", bend left]
        \end{tikzcd}
    \]
    Alternatively, we can view $X'$ as being obtained from $X$ by collapsing each lift of $T$ individually to a point.
    Since $(X/G)/T$ is a connected free $G$-CW-complex of finite type, $X\to X/G$ is a $G$-covering and $X$ is connected, the $G$-CW-complex $X'$ is also connected, free and of finite type. Furthermore, $X'$ is $G$-homotopy equivalent to $X$ via any $G$-equivariant lift of the homotopy equivalence $p$. By \cref{theo:betti:props:invariance}, the $D$-Betti numbers of $X$ and $X'$ agree, so we may assume without loss of generality that $X$ has a single equivariant 0-cell.

    Since $X$ is a free $G$-CW-complex of finite type which has a single $0$-cell, the differential $c_1\colon C_1(X)\to C_0(X)$ in its cellular chain complex is of the form
    \[\ZZ G^n \xrightarrow{\oplus_{i=1}^n (1-g_i)} \ZZ G\]
    for $g_i\in G, i=1,\dots,n, n\in \NN$ for any choice of a $\ZZ G$-basis of $C_*(X)$ consisting of cells. The image of the differential is contained in the augmentation ideal $I=\langle g-1\mid g\in G\rangle$ of $\ZZ G$, and, as $X$ is assumed to be connected, has to coincide with it for $H_0(X;\ZZ)$ to be isomorphic to $\ZZ$. By our additional assumption on the agrarian map, there is thus an element in the image of the differential that does not lie in the kernel of the agrarian map. But the image of this element is invertible in $D$, and hence $H_0(X;D)=0$ as claimed.

    \item On cellular chain complexes, $G\times_H ?$ translates into applying the functor $\ZZ G\otimes_{\ZZ H}?$.
    The claim thus follows from the canonical identification
    \[D\otimes_{\ZZ G}\ZZ G\otimes_{\ZZ H} C_*(X)\cong D\otimes_{\ZZ H} C_*(X).\]

    \item As $G$ is agrarian, its group ring $\QQ G$ does not admit zero divisors (this is immediate, since $\QQ G$ embeds into the same skew field $D$ as $\ZZ G$ does). A result of Tamari~\cite{Tamari1954} we used already for free abelian groups now implies that since $G$ is amenable, it admits an Ore field of fractions $F$ of $\QQ G$, and hence of $\ZZ G$. In particular, $F$ is flat over $\ZZ G$ and every embedding of $\ZZ G$ into a skew field, such as the agrarian embedding $\alpha\colon \ZZ G\hookrightarrow D$, factors through the natural inclusion $\ZZ G\hookrightarrow F$. We thus obtain the following for any $p\geq 0$, using first that $F\hookrightarrow D$ is flat and then that $\ZZ G\hookrightarrow F$ is flat:
    \begin{align*}
      \bag_p(X) &= \dim_D H_p(X;D) = \dim_D D\otimes_F H_p(X;F)\\
      &= \dim_D D\otimes_F F\otimes H_p(X;\ZZ G) = \dim_D D\otimes H_p(X;\ZZ G). \qedhere
    \end{align*}
\end{enumerate}
\end{proof}

The behavior of $L^2$-Betti numbers under restriction to finite-index subgroups carries over to agrarian invariants under an additional assumption on the agrarian map:
\begin{prop}
    \label{prop:betti:restriction}
    Let $H\leq G$ be a subgroup of $G$ of finite index $\vert G:H\vert<\infty$.
    Let $\alpha\colon \ZZ G\to D$ be an epic agrarian map for $G$ and denote the skew subfield of $D$ generated by $\alpha(\ZZ H)$ by $D'$.
    Assume that the map
    \[\Psi\colon D'\otimes_{\ZZ H} \ZZ G\to D, x \otimes g\mapsto x\cdot \alpha(g^{-1})\]
    of $D'$-$\ZZ G$-bimodules is an isomorphism.
    If $X$ is a free $G$-CW-complex of finite type, then for $p\geq 0$
    \[\bag_p(\res^H_G X)=|G:H|\cdot \bag_p(X).\]
\end{prop}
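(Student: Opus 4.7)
The plan is to use the isomorphism $\Psi$ to identify the $D$-chain complex computing $\bag_p(X)$ with the $D'$-chain complex computing $\bag_p(\res^H_G X)$, and then to account for the index $|G:H|$ as a change-of-dimension factor between $D$ and $D'$.

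First I would invoke \cref{rema:betti:epic} applied to the restricted agrarian map $\alpha|_{\ZZ H} \colon \ZZ H \to D$, whose image generates precisely $D'$ by hypothesis. This gives the identity
\[
\bag_p(\res^H_G X) = \dim_{D'} H_p\bigl(D' \otimes_{\ZZ H} C_*(X)\bigr),
\]
where on the right $C_*(X)$ is viewed as a $\ZZ H$-chain complex via restriction.

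Next I would use $\Psi$ and associativity of tensor products to obtain a chain of isomorphisms of $D'$-chain complexes
\[
D \otimes_{\ZZ G} C_*(X) \cong (D' \otimes_{\ZZ H} \ZZ G) \otimes_{\ZZ G} C_*(X) \cong D' \otimes_{\ZZ H} C_*(X),
\]
where the first isomorphism is induced by $\Psi^{-1}$ (viewed as a map of $D'$-$\ZZ G$-bimodules) and the second is the canonical one. Passing to homology in degree $p$ and taking $D'$-dimensions yields
\[
\dim_{D'} H_p\bigl(D \otimes_{\ZZ G} C_*(X)\bigr) = \bag_p(\res^H_G X).
\]

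Finally I would extract the factor $|G:H|$. Choosing right coset representatives $g_1,\dots,g_n$ for $H$ in $G$ with $n = |G:H|$, we have $\ZZ G = \bigoplus_{i=1}^{n} \ZZ H \cdot g_i$ as a left $\ZZ H$-module, so $\Psi$ exhibits $D$ as a free left $D'$-module of rank $n$. Consequently, for every left $D$-vector space $V$, restriction of scalars gives $\dim_{D'} V = |G:H| \cdot \dim_D V$. Applying this to $V = H_p(D \otimes_{\ZZ G} C_*(X))$, whose $D$-dimension is $\bag_p(X)$ by definition, we conclude
\[
\bag_p(\res^H_G X) = \dim_{D'} H_p\bigl(D \otimes_{\ZZ G} C_*(X)\bigr) = |G:H| \cdot \bag_p(X).
\]

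The only point that requires real care is the bimodule bookkeeping when applying $\Psi$: one must check that $\Psi$ is compatible with the right $\ZZ G$-action on both sides in order to tensor with $C_*(X)$ and preserve the $D'$-structure. This is built into the statement of the hypothesis on $\Psi$ and so is not a genuine obstacle; the remainder of the argument is a formal transfer between dimensions over a field extension, with the finiteness of type of $X$ ensuring that all dimensions appearing are finite and the multiplicative relation is unambiguous.
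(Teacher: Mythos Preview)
Your proof is correct and follows essentially the same approach as the paper: use $\Psi$ to identify $D'\otimes_{\ZZ H} C_*(X)$ with $D\otimes_{\ZZ G} C_*(X)$ as $D'$-chain complexes, then exploit that $\Psi$ exhibits $D$ as a left $D'$-vector space of dimension $|G:H|$ to convert $D'$-dimensions into $|G:H|$ times $D$-dimensions. Your explicit appeal to \cref{rema:betti:epic} to pass from $D$ to $D'$ when computing $\bag_p(\res^H_G X)$ is a welcome clarification that the paper leaves implicit.
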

\begin{proof}
    Since $D'\otimes_{\ZZ H} \ZZ G \otimes_{\ZZ G} C_*(X)\cong D'\otimes_{\ZZ H} C_*(\res_G^H X)$, the map $\Psi$ induces an isomorphism
    \[D'\otimes_{\ZZ H} C_*(\res_G^H X) \xrightarrow{\cong} \res_{D}^{D'} D \otimes_{\ZZ G} C_*(X)\]
    of $D'$-chain complexes.
    Passing to agrarian Betti numbers on both sides, we obtain
    \begin{equation}
        \label{eq:betti:restriction}
        \bag_p(\res_G^H X)=\dim_{D'} \res_{D}^{D'} H_p(D\otimes_{\ZZ G} C_*(X)).
    \end{equation}
    Since $\ZZ G$ is a free left $\ZZ H$-module of rank $\vert G:H\vert$, the isomorphism $\Psi$ exhibits $D$ as a left $D'$-vector space of dimension $\vert G:H\vert$.
    As a consequence,
    \[\dim_{D'} \res_{D}^{D'} V=\vert G:H\vert \cdot \dim_{D} V\]
    holds for any left $D$-vector space $V$.
    We arrive at the claimed formula by applying this identity to the right-hand side of \eqref{eq:betti:restriction}.
\end{proof}

\subsection{Mapping tori} In subsequent sections, we will study invariants of CW complexes with vanishing agrarian Betti numbers. In the context of $L^2$-invariants, an extremely useful way of showing the vanishing of $L^2$-Betti numbers comes from a celebrated theorem of Lück~\cite[Theorem~2.1]{Luck1994}. Below, we offer a straightforward adaption of Lück's result to the setting of agrarian Betti numbers. If $G$ satisfies the Atiyah conjecture, then our version reduces to the classical $L^2$-formulation if one considers the agrarian embedding $\ZZ G\hookrightarrow \linnelld(G)$ into the Linnell skew field.

\begin{defi}
    Let $f\colon X\to X$ be a selfmap of a path-connected space. The \emph{mapping torus} $T_f$ of $f$ is obtained from the cylinder $X\times[0, 1]$ by identifying $(x, 1)$ with $(f(x), 0)$ for every $x\in X$. The \emph{canonical projection} is the map $T_f\to S^1$ sending $(x, t)$ to $\exp(2\pi it)$. It induces an epimorphism $\pi_1(T_f)\to\pi_1(S^1)=\ZZ$.
\end{defi}

If $X$ has the structure of a CW-complex with $\beta_p(X)$ cells of dimension $p$ and $f$ is cellular, then $T_f$ can be endowed with a CW-structure with $\beta_p(T_f)=\beta_p(X)+\beta_{p-1}(X)$ cells of dimension $p$ for each $p\geq 0$.

\begin{theo}
\label{theo:betti:torus}
Let $f\colon X\to X$ be a cellular selfmap of a connected CW-complex $X$ and $\pi_1(T_f)\xrightarrow{\phi}G\xrightarrow{\psi}\ZZ$ any factorisation into epimorphisms of the epimorphism induced by the canonical projection.
Let $\covering{T_f}$ be the covering of the mapping torus $T_f$ associated to $\phi$, endowed with the structure of a connected free $G$-CW-complex.
Let $\alpha\colon \ZZ G\to D$ be a rational agrarian map for $G$.
If the $d$-skeleton of $X$ (and thus of $\covering{T_f}$) is finite for some $d\geq 0$, then for all $p\leq d$
\[\bag_p\left(\covering{T_f}\right)=0.\]
\end{theo}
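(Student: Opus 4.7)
The plan is to exhibit $C_*(\covering{T_f})$ as the mapping cone of a monodromy chain map along the $S^1$ direction, and then to invert this chain map after embedding $D$ into a skew Laurent series completion furnished by the rationality of $\alpha$. Let $K'$ denote the image of $\pi_1(X)$ under $\phi$; since $\pi_1(X)$ lies in the kernel of the canonical projection $\pi_1(T_f) \to \ZZ$, we have $K' \leq K \coloneqq \ker\psi$. Let $\covering{X}$ be the cover of $X$ with deck group $K'$, fix a lift $t \in G$ of $1 \in \ZZ$ under $\psi$ and a cellular lift $\tilde f \colon \covering{X} \to \covering{X}$ of $f$. Standard mapping-torus bookkeeping then yields a $\ZZ G$-chain isomorphism
\[
    C_*(\covering{T_f}) \;\cong\; \cone\bigl(\Phi \colon B_* \to B_*\bigr), \quad \Phi = \id - \Psi,
\]
where $B_* = \ZZ G \otimes_{\ZZ K'} C_*(\covering{X})$ and $\Psi$ is a $\ZZ G$-chain map whose matrix entries (in the cellular basis) are of the form $t \cdot k$ with $k \in \ZZ K'$. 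The finiteness of the $d$-skeleton of $X$ guarantees that $B_p$ is finitely generated free over $\ZZ G$ for every $p \leq d$. Tensoring the short exact sequence $0 \to B_* \to \cone(\Phi) \to \Sigma B_* \to 0$ with a coefficient module produces a long exact sequence whose connecting homomorphism is induced by $\Phi$, so $H_p$ of $\cone(\Phi)$ will vanish as soon as $\Phi_p$ and $\Phi_{p-1}$ are invertible on the corresponding chain modules after a flat extension of scalars.

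Invoking rationality, write $D = \Ore(\tilde D H)$ with $H$ the free part of the abelianisation of $G$ and $\tilde D$ a skew field. The epimorphism $\psi$ factors through $H$; pick a splitting $H \cong H_0 \times \langle s \rangle$, where $s$ is the image of $t$ in $H$, together with a set-theoretic section $H \to G$ sending $s$ to $t$. Then \cref{exam:agrarian:twisted} identifies $\alpha(t)$ with $s \in \tilde D H \subset D$. Let $\tilde D' = \Ore(\tilde D H_0)$ and let $\sigma$ denote the automorphism of $\tilde D'$ induced by conjugation with $s$. The twisted Laurent series ring $E = \tilde D'((s;\sigma))$ is a skew field, and $\tilde D H$ embeds naturally into it; by the universal property of the Ore localisation the embedding extends to $D \hookrightarrow E$. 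Under this embedding every matrix entry of $\Psi$ lies in $s \tilde D'$ and therefore has strictly positive $s$-valuation in $E$.

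Now fix $p \leq d$. Then $\Psi_p$ is a finite square matrix over $E$ with all entries of $s$-valuation $\geq 1$, so the geometric series $\sum_{n \geq 0} \Psi_p^n$ converges entrywise in $E$ and furnishes a two-sided inverse of $\Phi_p = \id - \Psi_p$. The same argument applies to $\Phi_{p-1}$. A routine manipulation of cycles and boundaries then shows that $\Phi_*$ is bijective on both $H_{p-1}(E \otimes B_*)$ and $H_p(E \otimes B_*)$, and the long exact sequence of the mapping cone forces $H_p(E \otimes \cone(\Phi)) = 0$. Faithful flatness of the skew-field extension $D \hookrightarrow E$ transfers the vanishing to $H_p(D \otimes C_*(\covering{T_f})) = 0$, giving $\bag_p(\covering{T_f}) = 0$. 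The main hurdle I foresee is the $\ZZ G$-equivariant identification $C_*(\covering{T_f}) \cong \cone(\Phi)$ in the fully twisted setting: one needs to track the twisted $K'$-equivariance of $\tilde f$ and to choose signs so that $\Psi$ is built with $t$ rather than $t^{-1}$ (otherwise one should embed $D$ into $\tilde D'((s^{-1}; \sigma^{-1}))$ instead). Once this bookkeeping is in place, the geometric-series inversion is mechanical.
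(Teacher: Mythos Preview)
Your argument is correct and genuinely different from the paper's. The paper follows L\"uck's classical $L^2$ strategy: for each $n\geq 1$ it passes to the index-$n$ subgroup $G_n=\psi^{-1}(n\ZZ)$, verifies (using rationality and a twisted Maschke argument) the hypothesis of \cref{prop:betti:restriction} so that $\bag_p(\covering{T_f})=\tfrac{1}{n}\bag_p(\res_G^{G_n}\covering{T_f})$, identifies $\covering{T_f}/G_n\simeq T_{f^n}$, bounds the right-hand side by the number of cells $\beta_p(X)+\beta_{p-1}(X)$, and lets $n\to\infty$. Your route instead exploits the cone description of the mapping-torus chain complex and inverts $\id-\Psi$ by a geometric series after embedding $D$ into a twisted Laurent series field; this makes the role of rationality entirely explicit (it is precisely what allows the construction of $E=\tilde D'((s;\sigma))$ and the valuation argument) and avoids any limiting process. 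The trade-off is that the paper's approach stays closer to the existing $L^2$ literature and uses only the restriction formula already established, whereas yours needs the equivariant cone identification you flag as the main bookkeeping hurdle.

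Two small points worth tightening. First, the paper fixes the section $H\to G$ once and for all when defining the rationalisation, so you should note (via \cref{lemm:agrarian:equivariant_embedding}) that changing the section only alters $DH$ by a support-preserving isomorphism, hence does not affect Betti numbers; this justifies choosing $s=\pr(t)$ and $s\mapsto t$. Second, when you pass from ``$\Phi_p$ and $\Phi_{p-1}$ are invertible on chains'' to vanishing of $H_p(E\otimes\cone(\Phi))$, you need both facts simultaneously to see that $H_p(\Phi)$ is surjective and $H_{p-1}(\Phi)$ is injective in the long exact sequence; the phrase ``routine manipulation'' is accurate, but since only degrees $\leq d$ are finite you should make clear that exactly $\Phi_{p-1}$ and $\Phi_p$ (and no higher $\Phi_q$) are required, both of which lie in the finite range.
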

\begin{proof}
The topological part of the proof is the analogue of the proof for $L^2$-Betti numbers, see~\cite[Theorem~1.39]{Luck2002}.

By \cref{rema:betti:epic,rema:betti:epic_rational}, we may assume that $\alpha$ is epic.
Fix $p\geq 0$. For any $n\geq 1$, define $G_n\leq G$ to be the preimage of the subgroup $n\cdot \ZZ\leq \ZZ$ under $\psi\colon G\to \ZZ$, for which we consider the induced agrarian map $\ZZ G_n\hookrightarrow \ZZ G\xrightarrow{\alpha} D$.
Further denote the kernel of $\psi$ by $K$ and the skew subfield of $D$ generated by $\ZZ K$ by $D'$.
Since the agrarian map $\alpha$ is epic and rational, the skew subfield $D_n$ of $D$ generated by $\alpha(\ZZ G_n)$ is given by $\Ore(D'(G_n/K))$.
\begin{clai}
    For our choice of $\alpha\colon \ZZ G\to D$ and $H\coloneqq G_n$, the map $\Psi$ of \cref{prop:betti:restriction} is an isomorphism.
\end{clai}
We first conclude the proof assuming the claim. Since $G_n$ has index $n$ in $G$, we deduce from the claim and \cref{prop:betti:restriction} that
\begin{equation}
    \label{eq:betti:torus:transfer}
    \bag_p\left(\covering{T_f}\right)=\frac{1}{n}\cdot\bag_p\left(\res_G^{G_n} \covering{T_f}\right).
\end{equation}
Reparametrising yields a homotopy equivalence $h\colon T_{f^n}\xrightarrow{\simeq}\covering{T_f}/G_n$ of CW-complexes, where $f^n$ denotes the $n$-fold composition of $f$. Let $\covering{T_{f^n}}$ be the $G_n$-space obtained as the following pullback, or equivalently, as the covering of $T_{f^n}$ corresponding to the kernel of $\pi_1(T_{f^n})\cong\pi_1(\covering{T_f}/G_n)\to G_n$:
\[
\begin{tikzcd}
    \covering{T_{f^n}}\arrow[r, "\covering{h}"]\arrow[d]&\res_G^{G_n}\covering{T_f}\arrow[d]\\
    T_{f^n}\arrow[r, "h"] & \covering{T_f}/G_n
\end{tikzcd}
\]
Since $h$ is a homotopy equivalence between base spaces of $G_n$-coverings, $\covering{h}$ is a $G_n$-homotopy equivalence.
By \cref{theo:betti:props:invariance}, we obtain
\begin{equation}
    \label{eq:betti:torus:homotopy}
    \bag_p\left(\covering{T_{f^n}}\right) = \bag_p\left(\res_G^{G_n}\covering{T_f}\right)
\end{equation}
for $p\geq 0$. Since $T_{f^n}$ has a CW-structure with $\beta_p(X)+\beta_{p-1}(X)$ cells of dimension $p$ and this number is finite by assumption, using \cref{theo:betti:props:bound} we conclude:
\[\bag_p\left(\covering{T_f}\right)\stackrel{\eqref{eq:betti:torus:transfer}}{=}\frac{1}{n}\cdot\bag_p\left(\res_G^{G_n}\covering{T_f}\right)\stackrel{\eqref{eq:betti:torus:homotopy}}{=}\frac{1}{n}\cdot\bag_p\left(\covering{T_{f^n}}\right)\leq \frac{\beta_p(X)+\beta_{p-1}(X)}{n}.\]
Letting $n\to\infty$ finishes the proof of the theorem assuming the claim.

\smallskip
\noindent \textit{Proof of the claim.}
For this proof, it is instructive to reinterpret the objects we are dealing with. Recall that $D = \Ore(D'(G/K))$, and hence its elements are twisted rational functions in one variable, say $t$, with coefficients in $D'$. Similarly, $D_n$ consists of such rational functions in a single variable $t^n$, and the embedding $D_n\to D$ is obtained by identifying the variable $t^n$ in the former ring of rational functions with the $n^{th}$ power of $t$ in the latter (as the notation suggests).

Now it becomes clear that $D_n \otimes_{\ZZ {G_n}} \ZZ G$ is generated by elements of the form $pq^{-1}\otimes t^m$ where $m \in \{0, \dots, n-1\}$ and where $p,q$ are twisted polynomials in $t^n$ with $q \neq 0$. Therefore we may view $D_n\otimes_{\ZZ {G_n}} \ZZ G$ as consisting of elements of the form $pq^{-1}$ where $q$ is a non-zero polynomial in $t^n$, and $p$ is a polynomial in $t$. Viewed in this way, the map $\Psi\colon D_n \otimes_{\ZZ {G_n}} \ZZ G \to D$ maps identically into $D$.

We are left to see that $\Psi$ is surjective, which we will achieve by equipping its domain with a ring structure.
If we denote the cyclic group $G/G_n$ of order $n$ by $\ZZ_n$, then $D_n\otimes_{\ZZ {G_n}} \ZZ G$ is identified with the twisted group ring $D_n \ZZ_n$ via the map $pq^{-1}\otimes t^m\mapsto pq^{-1} \ast m$, where $m \in \{0, \dots, n-1\}$ and $p,q$ are twisted polynomials in $t^n$ with $q \neq 0$.
We can thus replace the domain of $\Psi$ with $D_n \ZZ_n$ and note that the resulting map, which we again denote by $\Psi$, is in fact an injective ring homomorphism.
Since $\ZZ_n$ is a finite group and $D_n$ is a skew field of characteristic 0, the twisted group ring $D_n \ZZ_n$ is semisimple by~\cite[Lemma~10.55]{Luck2002} -- note that this is a version of Maschke's theorem for twisted group rings.
Since a semisimple subring of a skew field is a skew field and $D$ is assumed to be generated by $\ZZ G\subset D_n \ZZ_n$, we conclude that $\Psi$ is also surjective and hence an isomorphism.
\end{proof}

\section{Agrarian Torsion}
Having introduced agrarian Betti numbers together with computational tools allowing us to prove their vanishing for certain spaces, we will now present a secondary invariant for such spaces. This invariant will be called \emph{agrarian torsion} and arises as Reidemeister torsion with values in the abelianised units of the skew field $D$. It is motivated by the construction of \emph{universal $L^2$-torsion} by Friedl and Lück~\cite{FL2017}. We will reference the rather general treatment of torsion by Cohen~\cite{Cohen1973} throughout this section.

As usual, in this section $G$ will always be a group with a fixed agrarian map $\alpha\colon \ZZ G\to D$.

\subsection{\texorpdfstring{Contractible $D$-chain complexes}{Contractible D-chain complex}}
In order to define agrarian torsion, we require a contractible $D$-chain complex. In our case, contractibility is governed by the agrarian Betti numbers because of
\begin{prop}[{\cite[Proposition~1.7.4]{Rosenberg1994}}]
\label{prop:torsion:contractible}
Let $R$ be a ring and $C_*$ an $R$-chain complex. If $C_*$ is acyclic, vanishes in sufficiently small degree and consists of projective $R$-modules, then $C_*$ is contractible.
\end{prop}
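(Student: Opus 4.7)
The plan is to construct explicitly a contracting chain homotopy $s_n \colon C_n \to C_{n+1}$ satisfying $d_{n+1} s_n + s_{n-1} d_n = \id_{C_n}$, proceeding by induction on $n$ starting from the lowest degree in which $C_\ast$ is nonzero. This is the standard argument for the fact that a bounded-below acyclic complex of projectives is contractible; the three hypotheses of the proposition (boundedness below, acyclicity, projectivity) each enter at a specific step of the induction.

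For the base case, I would pick $N$ minimal with $C_N \neq 0$ and set $s_{N-1} = 0$. Acyclicity at degree $N$ forces the image of $d_{N+1}\colon C_{N+1} \to C_N$ to fill $\ker(d_N) = C_N$, so the differential is surjective. Projectivity of $C_N$ splits this surjection, providing $s_N$ with $d_{N+1} s_N = \id_{C_N}$, which is exactly the homotopy identity at degree $N$ (since $s_{N-1}$ is zero).

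For the inductive step, assuming $s_{n-1}$ has already been produced, I would aim to define $s_n$ so that $d_{n+1} s_n = \id_{C_n} - s_{n-1} d_n$. A short calculation using $d_{n-1} d_n = 0$ together with the inductive identity $d_n s_{n-1} + s_{n-2} d_{n-1} = \id_{C_{n-1}}$ shows that $\id_{C_n} - s_{n-1} d_n$ has image inside $\ker(d_n)$, which equals $\im(d_{n+1})$ by acyclicity. Projectivity of $C_n$ then lifts this map against the surjection $d_{n+1}\colon C_{n+1} \twoheadrightarrow \im(d_{n+1})$, yielding the required $s_n$.

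There is no real mathematical obstacle here — the statement is classical and only invoked as a black box in the paper. The point worth emphasising is the precise role of each hypothesis: vanishing in small degree supplies the base case of the induction; acyclicity ensures $\ker(d_n) = \im(d_{n+1})$ at every stage, so the candidate homotopy lands in the image of the next differential; and projectivity of each $C_n$ is used at every step to perform the lifting. Without any one of these, the construction fails at the corresponding step.
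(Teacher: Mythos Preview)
Your argument is correct and is precisely the standard proof of this classical fact. The paper does not give its own proof: the proposition is simply quoted from Rosenberg's book as a black box, so there is nothing to compare against beyond noting that your inductive construction of the contracting homotopy is exactly what underlies the cited reference.
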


\begin{lemm}
\label{lemm:torsion:acyclicity}
A finite $\ZZ G$-chain complex $C_*$ is $D$-acyclic if and only if $D\otimes C_*$ is contractible.
\end{lemm}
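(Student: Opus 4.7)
The plan is to deduce the equivalence directly from \cref{prop:torsion:contractible} together with the observation that over a skew field every module is projective.

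For the forward direction, assume $C_*$ is $D$-acyclic, so by definition $H_n(D\otimes C_*)=0$ for all $n\in\ZZ$, which is exactly the statement that the $D$-chain complex $D\otimes C_*$ is acyclic. Since $C_*$ is finite, it is in particular bounded, and tensoring with $D$ preserves this boundedness, so $D\otimes C_*$ vanishes in all sufficiently small (and large) degrees. Since $D$ is a skew field, every $D$-module is free and in particular projective, so each term of $D\otimes C_*$ is a projective $D$-module. All three hypotheses of \cref{prop:torsion:contractible} are thus satisfied, and we conclude that $D\otimes C_*$ is contractible.

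For the reverse direction, if $D\otimes C_*$ is contractible then it is chain homotopy equivalent to the zero complex, so its homology vanishes in every degree. Consequently $\bag_n(C_*)=\dim_D H_n(D\otimes C_*)=0$ for all $n$, which is exactly the statement that $C_*$ is $D$-acyclic.

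There is no real obstacle here; the argument is essentially a repackaging of the cited proposition once one notices that the skew-field hypothesis on $D$ makes the projectivity requirement automatic. The only point worth recording is that the finiteness of $C_*$ is used solely to guarantee vanishing in sufficiently small degree, which is what \cref{prop:torsion:contractible} needs in order to produce a contraction by an inductive splitting argument.
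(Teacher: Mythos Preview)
Your proof is correct and follows essentially the same approach as the paper: both arguments invoke \cref{prop:torsion:contractible} after observing that $D\otimes C_*$ is bounded below (since $C_*$ is finite) and consists of free (hence projective) $D$-modules because $D$ is a skew field. The paper's proof is merely terser and leaves the trivial reverse implication implicit.
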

\begin{proof}
Since $C_*$ is finite, the $D$-chain complex $D\otimes C_*$ is in particular bounded below. All its modules are free because $D$ is a skew field, and hence the statement follows from \cref{prop:torsion:contractible}.
\end{proof}

Agrarian torsion, being constructed as non-commutative Reidemeister torsion, naturally takes values in the first $K$-group of $D$:
\begin{defi}
Let $R$ be a ring. Denote by $\GL(R)$ the direct limit of the groups $\GL_n(R)$ of invertible $n\times n$ matrices over $R$ with the embeddings given by adding an identity block in the bottom-right corner. The \emph{$K_1$-group} $K_1(R)$ is defined as the abelianisation of $\GL(R)$. The \emph{reduced $K_1$-group} $\Kr(R)$ is defined as the quotient of $K_1(R)$ by the subgroup $\{(\pm 1)\}$.
\end{defi}

We now consider a $D$-acyclic finite free $\ZZ G$-chain complex $(C_*, c_*)$. Such a complex will be called \emph{based} if it comes with a choice of preferred bases for all chain modules. By the previous lemma, we can find a chain contraction $\gamma_*$ of $D\otimes C_*$. Set $C_{\mathrm{odd}}\coloneqq \bigoplus\limits_{i \, \mathrm{ odd}} C_i$ and $C_{\mathrm{even}}\coloneqq \bigoplus\limits_{i \, \mathrm{ even}} C_i$. Note that $D$-acyclicity guarantees that $\dim_D D\otimes C_\mathrm{odd} = \dim_D D\otimes C_\mathrm{even}$.

\begin{lemm}
\label{lemm:torsion:construction}
In the situation above, the map $c_*+\gamma_*\colon D\otimes C_{\mathrm{odd}}\to D\otimes C_{\mathrm{even}}$ is an isomorphism of finitely generated based free $D$-modules and the class in $\Kr(D)$ defined by the matrix representing it in the preferred basis does not depend on the choice of $\gamma$.
\end{lemm}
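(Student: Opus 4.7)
\emph{Invertibility.} The plan is to compute the square of $c_* + \gamma_*$ directly. Using $c_*^2 = 0$ and the chain-contraction identity $c_*\gamma_* + \gamma_*c_* = \id$, one obtains
\[
(c_* + \gamma_*)^2 = c_*^2 + (c_*\gamma_* + \gamma_*c_*) + \gamma_*^2 = \id + \gamma_*^2
\]
on $D \otimes C_*$. Since $C_*$ is finite, $D\otimes C_*$ is bounded, and $\gamma_*^2$ raises degree by $2$, so it is nilpotent; hence $\id + \gamma_*^2$ is invertible. Reading the identity above separately on $D\otimes C_\mathrm{odd}$ and on $D\otimes C_\mathrm{even}$ shows that the two compositions of $c_* + \gamma_*$ with itself ($\mathrm{odd}\to\mathrm{even}\to\mathrm{odd}$ and $\mathrm{even}\to\mathrm{odd}\to\mathrm{even}$) are automorphisms. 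Therefore $c_* + \gamma_*\colon D\otimes C_\mathrm{odd}\to D\otimes C_\mathrm{even}$ is both injective and surjective, i.e.\ an isomorphism.

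\emph{Independence of $\gamma_*$.} For two chain contractions $\gamma_*, \gamma'_*$ I would set $\eta_* \coloneqq \gamma'_* - \gamma_*$ and subtract the defining identities to obtain $c_*\eta_* + \eta_*c_* = 0$. In other words, $\eta_*$ is a $1$-cycle in the endomorphism chain complex of $D\otimes C_*$ equipped with the commutator differential $[c_*,-]$. Since $D\otimes C_*$ is a contractible bounded complex of free modules over a skew field, its endomorphism complex is also contractible, so $\eta_*$ is a boundary: there exists a $D$-linear map $\xi_*$ of degree $2$ satisfying $\eta_* = c_*\xi_* + \xi_*c_*$. The next step is to use $\xi_*$ to construct unipotent automorphisms $U$ of $D\otimes C_\mathrm{odd}$ and $V$ of $D\otimes C_\mathrm{even}$ (nilpotence again following from the boundedness of $D\otimes C_*$ and the fact that $\xi_*$ raises degree by $2$) satisfying
\[
c_* + \gamma'_* = V\cdot (c_* + \gamma_*)\cdot U.
\]
As unipotent matrices are products of elementary matrices and hence have trivial class in $\Kr(D)$, this would imply $[c_* + \gamma'_*] = [c_* + \gamma_*]$ in $\Kr(D)$, completing the proof.

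The delicate point, and the main obstacle I foresee, is the explicit construction of $U$ and $V$: the naive candidates built directly from $\id \pm \xi_*$ leave correction terms coming from the noncommutativity of $\gamma_*$ with $\xi_*$, and these have to be cancelled by iteratively modifying $\xi_*$. A more robust alternative is to first decompose $D\otimes C_*$ into elementary contractible summands of the form $0\to D^n \xrightarrow{\id} D^n \to 0$ (possible because $D$ is a skew field and the complex is contractible and of finite type) and verify the identity on each summand, where the calculation becomes immediate. Either approach amounts to the standard bookkeeping carried out in the classical treatment of torsion invariants in~\cite{Cohen1973}, which I would follow.
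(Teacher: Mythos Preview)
Your proposal is correct and in fact goes further than the paper, whose proof consists solely of the citations to \cite[(15.1)]{Cohen1973} and \cite[(15.3)]{Cohen1973}; your invertibility argument via $(c_*+\gamma_*)^2=\id+\gamma_*^2$ is precisely the content of the first of these. For independence you ultimately defer to Cohen as well, so there is no gap; let me only remark that the detour through a null-homotopy $\xi_*$ is unnecessary: writing $\eta_*=\gamma'_*-\gamma_*$ one has $(c_*+\gamma'_*)(c_*+\gamma_*)^{-1}=\id+\eta_*(c_*+\gamma_*)^{-1}$, which is block-triangular with respect to the degree filtration on $D\otimes C_{\mathrm{even}}$ with diagonal blocks $\id+\eta_* c_*$, and these are unipotent since $(\eta_* c_*)^2=\eta_*(c_*\eta_*)c_*=-\eta_*^2 c_*^2=0$.
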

\begin{proof}
That the map is an isomorphism is the content of~\cite[(15.1)]{Cohen1973}, the independence is covered by~\cite[(15.3)]{Cohen1973}.
\end{proof}

\subsection{The Dieudonné determinant} The $K_1$ groups of skew fields can be determined using a generalisation of the classical determinant of a matrix over a field to matrices over skew fields, which is known as the \emph{Dieudonné determinant}. As opposed to the situation for fields, there is no longer a polynomial expression in terms of the entries of the matrix; rather, the Dieudonné determinant is defined by an inductive procedure:

\begin{defi}
Let $A=(a_{ij})$ be an $n\times n$ matrix over a skew field $D$. The \emph{canonical representative of the Dieudonné determinant} $\det^c A\in D$ is defined inductively as follows:
\begin{enumerate}[label=(\arabic*)]
    \item If $n=1$, then $\det^c A\coloneqq a_{11}$.
    \item If the last row of $A$ consists of zeros only, then $\det^c A \coloneqq 0$.
    \item If $a_{nn}\neq 0$, then we form the $(n-1)\times(n-1)$ matrix $A'=(a_{ij}')$ by setting $a_{ij}'\coloneqq a_{ij} - a_{in}a_{nn}^{-1}a_{nj}$, and declare $\det^c A\coloneqq\det^c A'\cdot a_{nn}$.
    \item Otherwise, let $j<n$ be maximal such that $a_{nj}\neq 0$. Let $A'$ be obtained from $A$ by interchanging rows $j$ and $n$. Then set $\det^c A\coloneqq-\det^c A'$.
\end{enumerate}
The \emph{Dieudonné determinant} $\det A$ of $A$ is defined to be the image of $\det^c A$ in $D^\times/[D^\times, D^\times] \sqcup \{0\}$, i.e., in the abelianised unit group of $D$ if $\det^c A\neq 0$. We also write $\Dab$ for this group.
\end{defi}

As a convention, we will write the group operation of the abelian group $\Dab$ (and its quotients) additively.

If $D$ is a commutative field, then the Dieudonné determinant coincides with the usual determinant as the matrix $A$ is brought into upper-diagonal form during the inductive procedure defining $\det^c A$.

The Dieudonné determinant is multiplicative on all matrices and takes non-zero values on invertible matrices~\cite{Dieudonne1943}.
\begin{prop}[{\cite[Corollary~2.2.6]{Rosenberg1994}}]
Let $D$ be a skew field. Then the Dieudonné determinant $\det\colon\GL(D)\to\Dab$ induces group isomorphisms
\[\det\colon K_1(D)\xrightarrow{\cong} \Dab \textrm{ and}\]
\[\det\colon \Kr(D)\xrightarrow{\cong} \Dab/\{\pm 1\}.\]
\end{prop}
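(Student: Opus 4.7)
The plan is to reduce the statement to the standard fact that for a skew field $D$, the natural map $D^\times \to \GL(D) \to K_1(D)$ sending $d$ to the class of the $1 \times 1$ matrix $(d)$ descends to an isomorphism $\Dab \xrightarrow{\cong} K_1(D)$, with the Dieudonné determinant providing the inverse. I would carry this out in three steps.

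First, I would show that $\det$ descends to a well-defined homomorphism on $K_1(D)$. Using the multiplicativity of the Dieudonné determinant (cited from~\cite{Dieudonne1943} just above the proposition), the map $\det\colon \GL_n(D) \to \Dab$ is a group homomorphism for each $n$. It is compatible with the stabilisation $\GL_n(D) \hookrightarrow \GL_{n+1}(D)$: case (3) of the inductive definition of $\det^c$ applied to a block matrix with a $1$ in the lower-right corner immediately gives $\det^c\bigl(\begin{smallmatrix}A & 0 \\ 0 & 1\end{smallmatrix}\bigr) = \det^c A$. Hence $\det$ assembles into a homomorphism $\GL(D) \to \Dab$, and since the target is abelian it factors through $K_1(D) = \GL(D)^{ab}$. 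Surjectivity is immediate: every class in $\Dab$ is represented by some $d \in D^\times$, and $\det$ sends the class of $(d) \in \GL_1(D)$ to it.

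The substantive step is injectivity, which I would prove by non-commutative Gaussian elimination mirroring the inductive definition of $\det^c$. The two reductive operations used in that definition correspond to multiplication by elementary matrices (case (3): adding a left multiple of one row to another) and by row-transposition matrices (case (4): swapping two rows). Elementary matrices become trivial in $K_1(D)$ by the Whitehead lemma, and a row transposition is equivalent modulo elementary matrices to the diagonal matrix $\mathrm{diag}(-1, 1, \ldots, 1)$, as witnessed by the identity
\[
\begin{pmatrix}0 & 1 \\ 1 & 0\end{pmatrix} = \begin{pmatrix}1 & 1 \\ 0 & 1\end{pmatrix}\begin{pmatrix}1 & 0 \\ -1 & 1\end{pmatrix}\begin{pmatrix}1 & 1 \\ 0 & 1\end{pmatrix}\begin{pmatrix}-1 & 0 \\ 0 & 1\end{pmatrix}.
\]
Performing the algorithm on an arbitrary $A \in \GL_n(D)$ therefore exhibits $[A] \in K_1(D)$ as the class of $\mathrm{diag}(1, \ldots, 1, \det^c A)$, so $[A]$ depends only on $\det^c A$ modulo commutators, that is on $\det A \in \Dab$. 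The assignment $[d] \mapsto [(d)]$ is then a two-sided inverse to $\det$.

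The main obstacle is the sign bookkeeping: one must verify that the minus sign introduced by case (4) of the definition of $\det^c$ is faithfully reflected in $K_1(D)$, and not absorbed or doubled. The matrix identity above makes this exact, so $\det$ becomes a genuine isomorphism onto $\Dab$ rather than only a well-defined map up to $\pm 1$. The reduced version $\det\colon \Kr(D) \xrightarrow{\cong} \Dab/\{\pm 1\}$ then follows by quotienting both sides by the subgroup generated by the class of $-1$, which is the defining relation of $\Kr(D)$.
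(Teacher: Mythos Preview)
The paper does not give its own proof of this proposition: it is stated with a citation to \cite[Corollary~2.2.6]{Rosenberg1994} and used as a black box. Your argument is a correct and standard proof of the cited result, so there is nothing to compare against in the paper itself.
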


\subsection{Definition and properties of agrarian torsion} Relying on the explicit description of $\Kr(D)$ obtained above, we can motivate
\begin{defi}
\label{defi:torsion:torsion}
The \emph{$D$-agrarian torsion} of a $D$-acyclic finite based free $\ZZ G$-chain complex $(C_*, c_*)$ is defined as
\[\torsionag(C_*)\coloneqq\det([c_*+\gamma_*])\in \Dab/\{\pm 1\},\]
where $[c_*+\gamma_*]\in \Kr(D)$ is the class determined by the (representing matrix of the) isomorphism constructed in \cref{lemm:torsion:construction}.
\end{defi}

The usual additivity property for torsion invariants directly carries over to the agrarian setting in the following form:
\begin{lemm}[{\cite[(17.2)]{Cohen1973}}]
\label{lemm:torsion:additivity}
Let $0\to C'_*\to C_*\to C''_*\to 0$ be a short exact sequence of finite based free $\ZZ G$-chain complexes such that the preferred basis of $C_*$ is composed of the preferred basis of $C'_*$ and preimages of the preferred basis elements of $C''_*$. Assume that any two of the complexes are $D$-acyclic. Then so is the third and
\[\torsionag(C_*)=\torsionag(C'_*)+\torsionag(C''_*).\]
\end{lemm}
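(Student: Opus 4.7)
The plan is to follow the standard additivity argument for non-commutative Reidemeister torsion, using the fact that the hypothesis on bases forces a termwise splitting of the short exact sequence. First I would observe that the stated basis compatibility means exactly that, in each degree $n$, the preferred basis of $C_n$ is the disjoint union of the preferred basis of $C'_n$ and lifts of the preferred basis of $C''_n$. This gives a termwise $\ZZ G$-module splitting $C_n \cong C'_n \oplus C''_n$, and since such a termwise-split short exact sequence of $\ZZ G$-modules remains exact after applying $D \otimes_{\ZZ G} -$, we obtain an exact sequence $0 \to D\otimes C'_* \to D\otimes C_* \to D\otimes C''_* \to 0$ of $D$-chain complexes. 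The long exact sequence in homology then yields the two-out-of-three statement for $D$-acyclicity.

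Next I would exploit the termwise splitting to represent the differential of $D\otimes C_*$ in block upper triangular form
\[
c_n = \begin{pmatrix} c'_n & \epsilon_n \\ 0 & c''_n \end{pmatrix}
\]
for certain connecting $D$-linear maps $\epsilon_n$. Starting from arbitrary chain contractions $\gamma'$ and $\gamma''$ of $D\otimes C'_*$ and $D\otimes C''_*$, I would build a chain contraction $\gamma$ of $D\otimes C_*$ of the matching block upper triangular shape by adjoining an off-diagonal correction $\delta\colon D\otimes C''_* \to D\otimes C'_{*+1}$. The direct sum $\gamma'\oplus \gamma''$ fails to be a chain contraction by the explicit error $\gamma'\epsilon + \epsilon\gamma''$, and producing a $\delta$ that absorbs this error — essentially a diagram chase using $c^2=0$ together with the chain contraction identities to see that the error is itself a null-chain-homotopic chain map between contractible complexes — is the one technical step I expect to require careful bookkeeping, and is the main obstacle.

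With such a $\gamma$ in hand, the isomorphism $c_*+\gamma_*\colon D\otimes C_{\mathrm{odd}} \to D\otimes C_{\mathrm{even}}$ provided by \cref{lemm:torsion:construction} is represented, in the combined basis and under the induced odd/even splittings, by a block upper triangular matrix whose diagonal blocks represent the corresponding maps $c'_*+\gamma'_*$ and $c''_*+\gamma''_*$ for $C'_*$ and $C''_*$. I would then invoke the multiplicativity of the Dieudonné determinant on invertible block upper triangular matrices, which follows directly from its inductive definition together with the invariance of $\det$ under elementary row operations that clear the off-diagonal block. Translating the resulting identity in $\Dab/\{\pm 1\}$ through \cref{defi:torsion:torsion}, and using the additive convention fixed on $\Dab$, yields the claimed equality $\torsionag(C_*)=\torsionag(C'_*)+\torsionag(C''_*)$. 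Independence from the choice of chain contractions is already guaranteed by \cref{lemm:torsion:construction}, so no further verification is needed.
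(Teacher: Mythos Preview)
Your proposal is correct and reconstructs the standard additivity argument of \cite[(17.2)]{Cohen1973}, which the paper merely cites without reproducing; the termwise splitting, the block upper-triangular chain contraction built by null-homotoping the error $\gamma'\epsilon+\epsilon\gamma''$, and the block-triangular Dieudonn\'e determinant identity are exactly Cohen's ingredients. The only small point you leave implicit is that passing from the degreewise decomposition $C_n=C'_n\oplus C''_n$ to the decomposition $C_{\mathrm{odd}}=C'_{\mathrm{odd}}\oplus C''_{\mathrm{odd}}$ needed for the block form requires reordering the basis by a permutation, but this contributes only a sign and hence vanishes in $\Kr(D)$.
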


The difference in agrarian torsion between $\ZZ G$-chain homotopy equivalent chain complexes is measured by the Whitehead torsion of the chain homotopy equivalence, analogously to the statement of~\cite[Lemma~2.10]{FL2017} for universal $L^2$-torsion:
\begin{lemm}
\label{lemm:torsion:invariance}
Let $f\colon C_*\to E_*$ be a $\ZZ G$-chain homotopy equivalence of finite based free $\ZZ G$-chain complexes. Denote by $\rho(\cone(f_*))\in \Kr(\ZZ G)$ the Whitehead torsion of the contractible finite based free $\ZZ G$-chain complex $\cone(f_*)$. If one of $C_*$ and $E_*$ is $D$-acyclic, then so is the other and we get
\[\torsionag(E_*)-\torsionag(C_*)=\det\nolimits_D\Big(\alpha_*\big(\rho(\cone(f_*))\big)\Big),\]
where $\alpha_*\colon \Kr(\ZZ G)\to \Kr(D)$ is induced by $\alpha\colon \ZZ G\hookrightarrow D$.
\end{lemm}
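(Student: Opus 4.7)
The plan is to reduce the difference $\torsionag(E_*)-\torsionag(C_*)$ to the agrarian torsion of the mapping cone $\cone(f_*)$ via additivity, and then to identify this cone torsion with the image of the Whitehead torsion under $\alpha$ by naturality of the construction in \cref{lemm:torsion:construction}.

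First, I would observe that since $f_*$ is a $\ZZ G$-chain homotopy equivalence, its base change $\id_D\otimes f_*\colon D\otimes C_*\to D\otimes E_*$ is a $D$-chain homotopy equivalence, so the two complexes have isomorphic homology; in particular $C_*$ is $D$-acyclic if and only if $E_*$ is. Moreover, $\cone(f_*)$ is itself $\ZZ G$-contractible (being the mapping cone of a chain homotopy equivalence), so after tensoring with $D$ it is $D$-acyclic as well.

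Next, I would equip $\cone(f_*)_n=C_{n-1}\oplus E_n$ with the basis obtained from the preferred bases of $C_{n-1}$ and $E_n$ and apply \cref{lemm:torsion:additivity} to the canonical short exact sequence
\[0\to E_*\to \cone(f_*)\to \Sigma C_*\to 0\]
of finite based free $\ZZ G$-chain complexes, which meets the hypotheses of that lemma. This yields $\torsionag(\cone(f_*))=\torsionag(E_*)+\torsionag(\Sigma C_*)$. Combined with the standard suspension formula $\torsionag(\Sigma C_*)=-\torsionag(C_*)$, which holds because suspension exchanges the roles of $C_{\mathrm{odd}}$ and $C_{\mathrm{even}}$ (turning $c_*+\gamma_*$ into its inverse up to signs, and the resulting $(-1)^N$ factor is killed after passing to the quotient by $\{\pm 1\}$), this gives
\[\torsionag(\cone(f_*))=\torsionag(E_*)-\torsionag(C_*).\]

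It then remains to identify the left hand side with $\det_D(\alpha_*(\rho(\cone(f_*))))$. Since $\cone(f_*)$ is $\ZZ G$-contractible, choose any $\ZZ G$-chain contraction $\gamma_*$; by the very same recipe as in \cref{lemm:torsion:construction} (applied to the ring $\ZZ G$), the Whitehead torsion $\rho(\cone(f_*))\in\Kr(\ZZ G)$ is the class of the $\ZZ G$-module isomorphism $c_*^{\cone}+\gamma_*\colon \cone(f_*)_{\mathrm{odd}}\to \cone(f_*)_{\mathrm{even}}$. Applying $\alpha$ entrywise to a matrix representative of this isomorphism produces a matrix representative of the corresponding $D$-module isomorphism $(c_*^{\cone}+\gamma_*)\otimes_{\ZZ G} D$, where $\id_D\otimes\gamma_*$ is a $D$-chain contraction of $D\otimes\cone(f_*)$. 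Hence $\alpha_*(\rho(\cone(f_*)))\in\Kr(D)$ agrees with the class used to define $\torsionag(\cone(f_*))$ in \cref{defi:torsion:torsion}, and taking Dieudonné determinants concludes the argument. The main point to keep straight is the compatibility of bases and sign conventions under the three operations (Whitehead torsion, base change, and suspension); everything else is formal once additivity is in place.
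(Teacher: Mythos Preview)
Your proof is correct and follows essentially the same approach as the paper's: both use the short exact sequence $0\to E_*\to\cone(f_*)\to\Sigma C_*\to 0$, additivity (\cref{lemm:torsion:additivity}), the suspension formula $\torsionag(\Sigma C_*)=-\torsionag(C_*)$, and the identification of $\torsionag(\cone(f_*))$ with $\det_D(\alpha_*(\rho(\cone(f_*))))$ via a $\ZZ G$-chain contraction. Your write-up is slightly more explicit about the naturality step and the suspension sign, but the logical structure is identical.
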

\begin{proof}
Since $f_*$ is a $\ZZ G$-chain homotopy equivalence, the finite free $\ZZ G$-chain complex $\cone(f_*)$ is contractible and hence its Whitehead torsion $\rho(\cone(f_*))$ is defined.
The finite free $D$-chain complex $D\otimes\cone(f_*)$ is again contractible and since the matrix defining its agrarian torsion are already invertible over $\ZZ G$, we get that $\torsionag(\cone(f_*))=\det\nolimits_D(\alpha_*(\rho(\cone(f_*))))$.

We now apply \cref{lemm:torsion:additivity} to the short exact sequence
\[0\to E_*\to\cone_*(f_*)\to\Sigma C_*\to 0\]
 with $\cone_*(f_*)$ and one of $\Sigma C_*$ and $E_*$ being $D$-acyclic.
Since $\torsionag(\Sigma C_*)=-\torsionag(C_*)$, as is readily observed from the definition of $\torsionag$, we obtain that $\torsionag(E_*)-\torsionag(C_*)=\torsionag(\cone(f_*))=\det\nolimits_D(\alpha_*(\rho(\cone(f_*))))$.
\end{proof}

Our goal is to apply the concept of $D$-agrarian torsion to $G$-CW-complexes. Since the free cellular chain complexes associated to such complexes do not admit a canonical $\ZZ G$-basis, but only a canonical $\ZZ$-basis (up to orientation), we have to account for this additional indeterminacy by passing to a further quotient of $\Dab$:
\begin{defi}
Let $X$ be a $D$-acyclic finite free $G$-CW-complex. The \emph{$D$-agrarian torsion} of $X$ is defined as
\[\torsionag(X)\coloneqq \torsionag(C_*(X)) \in \Dab/\{\pm g\mid g\in G\},\]
where $C_*(X)$ is endowed with any $\ZZ G$-basis that projects to a $\ZZ$-basis of $C_*(X/G)$ consisting of unequivariant cells.
\end{defi}

That $\torsionag(X)$ is indeed well-defined can be seen from~\cite[(15.2)]{Cohen1973}.

\subsection{\texorpdfstring{Comparison with universal $L^2$-torsion}{Comparison with universal L\^{}2-torsion}}
A rich source of agrarian groups is the class of torsion-free groups that satisfy the Atiyah conjecture. For these groups, there is a canonical skew field $\linnelld(G)$ in which the group ring $\ZZ G$ embeds. In the case of $D=\linnelld(G)$, agrarian torsion coincides with the determinant of the universal $L^2$-torsion introduced by Friedl and Lück in~\cite{FL2017}, as we will see now.

Universal $L^2$-torsion naturally lives in a weak version of the $K_1$-group of the group ring, which is defined as follows:
\begin{defi}[{\cite[Definition~2.1]{FL2017}}]
Let $G$ be a group. Denote by $\Kw(\ZZ G)$ the \emph{weak $K_1$-group}, which is defined to be an abelian groups with the following generators and relations:
\begin{description}
    \item[Generators] $[A]$ for square matrices $A$ over $\ZZ G$ that become invertible after the change of rings $\ZZ G\hookrightarrow\linnelld(G)$
    \item[Relations]
    \begin{itemize}
        \item $[AB] = [A] + [B]$ for matrices $A$ and $B$ of compatible sizes and such that $A$ and $B$ become invertible over $\linnelld(G)$.
        \item $[D] = [A] + [C]$ for a block matrix
        \[D=\begin{pmatrix}A & B \\ 0 & C\end{pmatrix}\]
        with $A$ and $C$ square and invertible over $\linnelld(G)$.
    \end{itemize}
\end{description}
Define the \emph{weak Whitehead group} $\Whw(G)$ as the quotient of $\Kw(\ZZ G)$ by the subgroup generated by the $1\times 1$-matrices $(\pm g)$ for all $g\in G$.
\end{defi}

Note that there are canonical maps $K_1(\ZZ G)\to \Kw(\ZZ G)$ and $\Kw(\ZZ G)\to K_1(\linnelld(G))$ given by $[A]\mapsto [A]$ and $[A]\mapsto [1\otimes A]$ on generators, respectively.

The following result by Linnell and Lück indicates that the abelian groups in which agrarian torsion and universal $L^2$-torsion take values coincide up to isomorphism for a large class of groups:
\begin{theo}[\cite{LL2018}]
\label{theo:torsion:linnell_luck}
Let $\mathcal{C}$ be the smallest class of groups which contains all free groups and is closed under direct union and extensions by elementary amenable groups. Let $G$ be a torsion-free group which belongs to $\mathcal{C}$. Then $
\linnelld(G)$ is a skew field and the composite map
\[\Kw(\ZZ G)\to K_1(\linnelld(G)) \xrightarrow{\det} \Dab[\linnelld(G)]\]
is an isomorphism.
\end{theo}

Let $X$ be a finite free $G$-CW-complex that is \emph{$L^2$-acyclic}, i.e., whose $L^2$-Betti numbers vanish. Friedl and Lück~\cite[Definition~3.1]{FL2017} associate to such a $G$-CW-complex an element $\torsionu(X)\in \Whw(G)$ called the \emph{universal $L^2$-torsion} of $X$. We can obtain from this an element
\[\det(\torsionu(X))\in\Dab[\linnelld(G)]/\{\pm g\mid g\in G\},\]
which by \cref{theo:torsion:linnell_luck} carries an equivalent amount of information as $\torsionu$ for many groups $G$.

The statement of the following theorem is implicit in~\cite[Section~2.3]{FLT2016} by Friedl, Lück and Tillmann.

\begin{theo}
\label{theo:torsion:linnelld}
Let $G$ be a torsion-free group that satisfies the Atiyah conjecture. Then $G$ is $\linnelld(G)$-agrarian. Furthermore, if $X$ is any finite free $G$-CW-complex, then $X$ is $\linnelld(G)$-acyclic if and only if it is $L^2$-acyclic. If this is the case, we have
\[\torsionag[\linnelld(G)](X)=\det(\torsionu(X))\in\Dab[\linnelld(G)]/\{\pm g\mid g\in G\}.\]
\end{theo}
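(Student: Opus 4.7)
The plan is to handle the three assertions in turn, with the torsion identity being the main content.

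The first two statements follow immediately from material already recorded in the exposition: since $G$ satisfies the Atiyah conjecture, the construction of the Linnell skew field (cited in the introduction after \cref{exam:agrarian:twisted}) yields an embedding $\ZZ G \hookrightarrow \linnelld(G)$, exhibiting $G$ as $\linnelld(G)$-agrarian. The coincidence of $\linnelld(G)$-acyclicity with $L^2$-acyclicity is then immediate from~\cite[Theorem~3.6~(2)]{FL2016}, already cited above, which identifies the $\linnelld(G)$-Betti numbers of $X$ with its $L^2$-Betti numbers.

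For the torsion identity, I would exhibit both invariants as images of essentially the same algebraic datum under parallel composites of natural maps. By \cref{defi:torsion:torsion}, the agrarian torsion $\torsionag[\linnelld(G)](X)$ is the class in $\Dab[\linnelld(G)]/\{\pm g\mid g\in G\}$ represented by the Dieudonné determinant of the isomorphism $c_* + \gamma_*\colon \linnelld(G)\otimes C_{\mathrm{odd}}\to \linnelld(G)\otimes C_{\mathrm{even}}$, for any $\linnelld(G)$-chain contraction $\gamma_*$ of $\linnelld(G)\otimes C_*(X)$ (which exists by \cref{lemm:torsion:acyclicity}). The universal $L^2$-torsion $\torsionu(X)\in\Whw(G)$ of~\cite[Definition~3.1]{FL2017} is built from the very same chain complex $C_*(X)$ via a \emph{weak} chain contraction living over $\ZZ G$, which becomes a bona fide $\linnelld(G)$-chain contraction after base change. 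I would then check that the canonical composition
\[
\Whw(G)\longrightarrow K_1(\linnelld(G))/\{\pm g\}\xrightarrow{\;\det\;} \Dab[\linnelld(G)]/\{\pm g\}
\]
sends $\torsionu(X)$ precisely to the Dieudonné determinant of $c_*+\gamma_*$, i.e.\ to $\torsionag[\linnelld(G)](X)$.

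The remaining work is essentially bookkeeping: one needs to verify that the auxiliary $\ZZ G$-endomorphism built into the weak chain contraction becomes invertible over $\linnelld(G)$ and that its Dieudonné determinant cancels between numerator and denominator in the passage to $\Dab[\linnelld(G)]$. Independence from all intermediate choices is guaranteed on both sides (by well-definedness of $\torsionu$ and by \cref{lemm:torsion:construction}, together with the additivity recorded in \cref{lemm:torsion:additivity}), so matching formulas at one convenient choice of data suffices. I expect this cancellation argument to be the main obstacle, since the indirect definition of universal $L^2$-torsion is set up precisely to cope with the absence of $\ZZ G$-chain contractions; however, it is essentially the observation made in~\cite[Section~2.3]{FLT2016} cited in the statement, and once the reduction is carried out the equality of classes in $\Dab[\linnelld(G)]/\{\pm g\}$ follows directly from comparing the defining matrices.
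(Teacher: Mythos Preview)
Your approach to the first two statements matches the paper's in spirit, though the paper cites \cite[Lemma~10.39]{Luck2002} for the agrarian embedding and \cref{lemm:torsion:acyclicity} together with \cite[Lemma~2.21]{FL2017} for the acyclicity equivalence, rather than \cite[Theorem~3.6~(2)]{FL2016}.

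For the torsion identity, your strategy is genuinely different from the paper's. You propose a direct comparison of the two constructions: take a weak chain contraction $(\delta_*, v_*)$ of $C_*(X)$, push it to $\linnelld(G)$ where $v_*$ becomes invertible, manufacture an honest chain contraction from it, and then show that the contribution of $v_*$ cancels in the passage to $\Dab[\linnelld(G)]$. This is workable, and is essentially what is sketched in \cite[Section~2.3]{FLT2016}, but it does require unwinding the definition of $\torsionu$ for a general complex and tracking the cancellation carefully. One small imprecision: the weak chain contraction does not literally become a chain contraction after base change, since one still has $\delta c + c\delta = v$ rather than $\id$; you need to invert $v$ and compose, and then check the resulting formula matches. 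You seem aware of this, but it is worth stating cleanly.

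The paper instead avoids this computation entirely by invoking the \emph{universal property} of $(\Kwr(\ZZ G), \torsionu)$ from \cite[Theorem~2.12 and Remark~2.16]{FL2017}. It first verifies by hand that $\torsionag[\linnelld(G)](C_*^A) = \det A = \det(\torsionu(C_*^A))$ for two-term complexes $C_*^A$ given by a single matrix $A$, then observes that $\torsionag[\linnelld(G)]$ is an additive $L^2$-torsion invariant (using \cref{lemm:torsion:additivity}), and concludes by universality that the comparison map $\Kwr(\ZZ G)\to \Dab[\linnelld(G)]/\{\pm 1\}$ induced by $\torsionag[\linnelld(G)]$ must equal $\det$, since they agree on the generators $[A]=\torsionu(C_*^A)$. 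This buys a cleaner argument with no need to manipulate weak chain contractions for general complexes; your approach buys directness and independence from the universal property machinery.
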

\begin{proof}
During the proof, we will use the notion of universal $L^2$-torsion for $L^2$-acyclic finite based free $\ZZ G$-chain complexes as defined in~\cite[Definition~2.7]{FL2017}. The universal $L^2$-torsion of a finite free $G$-CW-complex is then obtained as the universal $L^2$-torsion of the associated cellular chain complex together with any basis consisting of $G$-cells. We will also abuse notation in that we consider classes in $\Kwr(\ZZ G)$ to be represented by both square matrices over $\ZZ G$ (our convention) and $\ZZ G$-endomorphisms of some $\ZZ G^n$ (the convention in~\cite{FL2017}).

The first statement is proved analogously to one direction of~\cite[Lemma~10.39]{Luck2002}, the second statement then follows from \cref{lemm:torsion:acyclicity} and~\cite[Lemma~2.21]{FL2017}.

In order to prove the last statement, we want to make use of the universal property of universal $L^2$-torsion (see~\cite[Remark~2.16]{FL2017}). To this end, we first consider $\ZZ G$-chain complexes of the following simple form: Let $[A]\in\Kwr(\ZZ G)$ be represented by an $n\times n$ matrix $A$ over $\ZZ G$, and let $C_*^A$ be the $\ZZ G$-chain complex concentrated in degrees 0 and 1 with the only non-trivial differential given by $r_A\colon\ZZ G^n\to \ZZ G^n, x \mapsto x\cdot A$. Since $A$ becomes an isomorphism over $\linnelld(G)$, such a complex is always $\linnelld(G)$- and thus $L^2$-acyclic.

The universal $L^2$-torsion of $C_*^A$ is computed from a weak chain contraction $(\delta_*, v_*)$ of $C_*^A$ as defined in~\cite[Definition~2.4]{FL2017}. In this particular case, we can take $\delta_0=\id_{\ZZ G^n}, \delta_p=0$ for $p\neq 0$ and $v_0=v_1=r_A, v_p=0$ for $p\not\in\{0, 1\}$. According to~\cite[Definition~2.7]{FL2017}, the universal $L^2$-torsion of $C_*^A$ is thus given by
\[\torsionu(C_*^A)=[v_1\circ r_A + 0] - [v_1]=[r_A^2]-[r_A]=[A]\in \Kwr(\ZZ G)\]
and hence $\det(\torsionu(C_*^A))=\det A \in \Dab[\linnelld(G)]/\{\pm 1\}$.

The $\linnelld(G)$-agrarian torsion of $C_*^A$ is computed from a (classical) chain contraction of $\linnelld(G)\otimes C_*^A$; let $\gamma_*$ be such a contraction with $\gamma_0=(\id_{\linnelld(G)}\otimes r_A)^{-1}$ and $\gamma_p=0$ for $p\neq 0$. Since $\gamma$ vanishes in odd degrees, the construction of $\linnelld(G)$-agrarian torsion yields
\[\torsionag[\linnelld(G)](C_*^A)=\det([ \id_{\linnelld(G)}\otimes r_A + 0])=\det A\in \Dab[\linnelld(G)]/\{\pm 1\},\]
and hence $\det(\torsionu(C_*^A))=\torsionag[\linnelld(G)](C_*^A)$.

The pair $(\Dab[\linnelld(G)]/\{\pm 1\}, \torsionag[\linnelld(G)])$ consists of an abelian group and an assignment that associates to a $\linnelld(G)$-acyclic (i.e., $L^2$-acyclic) finite based free $\ZZ G$-chain complex an element $\torsionag[\linnelld(G)]\in \Dab[\linnelld(G)]/\{\pm 1\}$. The assignment is additive by \cref{lemm:torsion:additivity} and maps complexes of the shape $\ZZ G\xrightarrow{\pm \id_{\ZZ G}}\ZZ G$ to $1\in \Dab[\linnelld(G)]/\{\pm 1\}$ by construction. It hence constitutes an example of an \emph{additive $L^2$-torsion invariant} in the sense of~\cite[Remark~2.16]{FL2017}. Since by~\cite[Theorem~2.12]{FL2017} the pair $(\Kwr(\ZZ G), \torsionu)$ is the universal such invariant, there is a unique group homomorphism $f\colon \Kwr(\ZZ G)\to \Dab[\linnelld(G)]/\{\pm 1\}$ satisfying $f\circ \torsionu = \torsionag[\linnelld(G)]$.

It is left to check that $f$ and $\det$ agree as maps $\Kwr(\ZZ G)\to \Dab[\linnelld(G)]/\{\pm 1\}$. We have seen already that $\det(\torsionu(C_*^A))=\torsionag[\linnelld(G)](C_*^A)$. But $\torsionu(C_*^A)=[A]$, and hence $\{\torsionu(C_*^A)\mid [A]\in\Kwr(\ZZ G)\}$ generates $\Kwr(\ZZ G)$ as a group. Since $f$ agrees with $\det$ on this generating set, we conclude that $f=\det$.
\end{proof}

\section{Agrarian Polytope}
Our aim for this section is to extract from the agrarian torsion of a $G$-CW-complex a finite combinatorial object, namely a polytope. The polytope arises from a Newton polytope construction applied to elements of the Ore localisation of a twisted polynomial ring over the target skew field of an agrarian map. Our motivation for studying the Newton polytope of a torsion invariant originates from~\cite{FL2017}.

\subsection{The polytope group} Before we get to define the Newton polytope of an element of a twisted group ring, we have to introduce several concepts related to polytopes.

\begin{defi}
Let $V$ be a finite-dimensional real vector space. A \emph{polytope} in $V$ is the convex hull of finitely many points in $V$.
For a polytope $P\subset V$ and a linear map $\phi\colon V\to\RR$ we define
\[F_\phi(P)\coloneqq \{p\in P \mid \phi(p)=\min_{q\in P} \phi(q)\}\]
and call this polytope the \emph{$\phi$-face} of $P$. The elements of the collection
\[\{F_\phi(P)\mid \phi\colon V\to\RR\}\]
 are the \emph{faces} of $P$. A face is called a \emph{vertex} if it consists of a single point.
\end{defi}

Note that with the above definition polytopes are always compact and convex.

In the following, we will always take $V=\RR \otimes_{\ZZ} H$ for some finitely generated free abelian group $H$.

\begin{defi}
A polytope $P$ in $V$ is called \emph{integral} if its vertices lie on the lattice $H\subset V$.
\end{defi}

Given two (integral) polytopes $P$ and $Q$ in $V$ we can form the pointwise sum $P + Q=\{p+q\mid p\in P, q\in Q\}$. As a subset of $V$ this will again be an (integral) polytope, the vertices of which are obtained as pointwise sums of some of the vertices of $P$ and $Q$. The polytope $P+Q$ is called the \emph{Minkowski sum} of $P$ and $Q$. With the Minkowski sum the set of all (integral) polytopes in $V$ becomes an abelian monoid with neutral element $\{0\}$. It is cancellative, i.e., $P+Q=P'+Q$ for polytopes $P, P'$ and $Q$ implies $P=P'$, see~\cite[Lemma~2]{Radstrom1952}. The monoid can thus be embedded into an abelian group in a universal way and gives rise to the following algebraic object introduced in~\cite[6.3]{FT2015}:
\begin{defi}
Let $H$ be a finitely generated free abelian group. Denote by $\Poly(H)$ the \emph{polytope group} of $H$, that is the Grothendieck group of the cancellative abelian monoid given by all integral polytopes in $\RR \otimes_{\ZZ} H$ under Minkowski sum. In other words, let $\Poly(H)$ be the abelian group with generators the formal differences $P-Q$ of integral polytopes and relations $(P-Q)+(P'-Q')=(P+P')-(Q-Q')$ as well as $P-Q=P'-Q'$ if $P+Q'=P'+ Q'$. The neutral element is given by the one-point polytope $\{0\}$, which we will drop from the notation. We view $H$ as a subgroup of $\Poly(H)$ via the map $h\mapsto \{h\}$. The \emph{translation-invariant polytope group} of $H$, denoted by $\PolyT(H)$, is defined to be the quotient group $\Poly(H)/H$.
\end{defi}

While a general element of the polytope group can have many equivalent representations as a formal difference of polytopes, an element of the type $P-0$, or $P$ for short, is uniquely presented in this form. Such an element is called a \emph{single polytope}.

\subsection{The polytope homomorphism} As is the case for the $L^2$-torsion polytope, the following simple construction underpins the definition of the agrarian polytope:
\begin{defi}
Let $D$ be a skew field and let $H$ be a finitely generated free abelian group. Let $DH$ denote some twisted group ring formed out of $D$ and $H$. The \emph{Newton polytope} $P(p)$ of an element $p=\sum_{h\in H} u_h \ast h\in DH$ is the convex hull of the \emph{support} $\supp(p)=\{h\in H\mid u_h\neq 0\}$ in $H_1(H;\RR)$.
\end{defi}

Since $H$ is finitely generated free abelian, as in \cref{defi:agrarian:ore_field} we can consider the Ore field of fractions $\Ore(DH)$ of the twisted group ring $DH$. The previous definition can then be extended to elements of $\Ore(DH)$ as follows:
\begin{defi}
The group homomorphism
\begin{align*}
    P\colon \Dab[\Ore(DH)] &\to \Poly(H)\\
    pq^{-1}&\mapsto P(p)-P(q)
\end{align*}
is called the \emph{polytope homomorphism} of $\Ore(DH)$. It induces a homomorphism
\[P\colon \Dab[\Ore(DH)]/\{\pm h\mid h\in H\} \to \PolyT(H).\]
\end{defi}
The well-definedness of $P$ is immediate from the construction of $\Poly(H)$.
The fact that $P$ is a group homomorphisms is not hard, and has been shown in~\cite[Lemma 3.12]{Kielak2018} (see also the discussion following the lemma).

\subsection{The agrarian polytope for rational agrarian maps} We now consider a finitely generated group $G$ and take the free abelian group $H$ to be the free part of the abelianisation of $G$. Furthermore, we denote the canonical projection onto $H$ by $\pr$ and its kernel by $K$.

In~\cite{FL2017}, the Newton polytope is constructed for the Linnell skew field $\linnelld(G)$, which can be expressed as an Ore localisation of the twisted group ring $\linnelld(K)H$. While the target of an arbitrary agrarian map for $G$ is of course not always an Ore localisation of a suitable twisted group ring, this will be the case for the \emph{rational} agrarian maps introduced in \cref{defi:agrarian:rational}.

\begin{defi}
\label{defi:polytope:agrarian:chain_complex}
Let $\alpha\colon \ZZ G\to D$ be a rational agrarian map for $G$ and consider a $D$-acyclic finite based free $\ZZ G$-chain complex $C_*$.
The \emph{$D$-agrarian polytope} of $C_*$ is defined as
\[\Pag{D}(C_*) \coloneqq P(-\torsionag[D](C_*)) \in \Poly(H).\]
\end{defi}

The purpose of the sign in the definition of the $D$-agrarian polytope is to get a single polytope in many cases of interest. A priori, the polytope may depend on the choice of the section of $\pr$ involved in the construction of $D$. By \cref{lemm:agrarian:equivariant_embedding}, the twisted group rings obtained from any two choices differ by an isomorphism preserving supports, and thus the agrarian polytope is indeed well-defined.

We will mostly be interested in the agrarian polytope associated to the cellular chain complex of a $G$-CW-complex, where we have to account for the indeterminacy caused by the need to choose a basis of cells:
\begin{defi}
\label{defi:polytope:agrarian}
Let $\alpha\colon \ZZ G\to D$ be a rational agrarian map for $G$ and consider a $D$-acylic finite free $G$-CW-complex $X$.
The \emph{$D$-agrarian polytope} of $X$ is defined as
\[\Pag{D}(X) \coloneqq P(-\torsionag[D](X)) \in \PolyT(H).\]
\end{defi}

\begin{prop}
\label{prop:polytope:invariance}
The $D$-agrarian polytope $\Pag{D}(X)$ is a $G$-homotopy invariant of $X$.
\end{prop}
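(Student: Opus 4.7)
The plan is to reduce the invariance statement to the chain-level correction formula of \cref{lemm:torsion:invariance}, and then to verify that the resulting Whitehead-torsion term has trivial Newton polytope in $\PolyT(H)$.

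A $G$-homotopy equivalence $f\colon X\to Y$ between $D$-acyclic finite free $G$-CW-complexes can, after cellular approximation, be assumed cellular, and thus induces a $\ZZ G$-chain homotopy equivalence $f_*\colon C_*(X)\to C_*(Y)$ of cellular chain complexes equipped with bases consisting of equivariant lifts of cells. By \cref{lemm:torsion:invariance},
\[
\torsionag[D](Y)-\torsionag[D](X)=\det\nolimits_D\bigl(\alpha_*(\rho(\cone(f_*)))\bigr),
\]
where the Whitehead torsion $\rho(\cone(f_*))\in\Kr(\ZZ G)$ depends on the chosen bases only up to multiplication by $\pm g$ for $g\in G$. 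Passing to the quotient $\Dab/\{\pm g\mid g\in G\}$ and applying the polytope homomorphism, which descends to $P\colon\Dab/\{\pm g\mid g\in G\}\to\PolyT(H)$, we obtain
\[
\Pag{D}(X)-\Pag{D}(Y)=P\bigl(\det\nolimits_D\bigl(\alpha_*(\rho(\cone(f_*)))\bigr)\bigr).
\]
It therefore suffices to prove that the composite $\Kr(\ZZ G)\xrightarrow{\;\det_D\circ\alpha_*\;}\Dab/\{\pm g\}\xrightarrow{\;P\;}\PolyT(H)$ vanishes on any class represented by a matrix $B\in\GL_n(\ZZ G)$.

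Given such a $B$ with inverse $B^{-1}\in\GL_n(\ZZ G)$, applying $\alpha$ produces mutually inverse matrices $\alpha(B),\alpha(B^{-1})$ over $\alpha(\ZZ G)\subseteq D'H$, where $D'\subseteq D$ is the skew subfield generated by $\alpha(\ZZ K)$ and $D=\Ore(D'H)$. In particular, $\alpha(B)\in\GL_n(D'H)$, and the proof reduces to the following assertion, which I expect to be the main obstacle.

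\smallskip
\noindent\emph{Key step.} For every $M\in\GL_n(D'H)$, the Newton polytope $P(\det_D M)\in\Poly(H)$ is a single point of $H$, and hence vanishes in $\PolyT(H)$.
\smallskip

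My proposal to establish this step is twofold. On the one hand, since the finitely generated free abelian group $H$ is biorderable, fixing a biordering allows one to compare leading terms multiplicatively in $D'H$: the product of the leading terms of two nonzero elements is the leading term of their product, with no cancellation. It follows that the units of $D'H$ are precisely the monomials $d\ast h$ with $d\in (D')^\times$ and $h\in H$, each having a single-point Newton polytope. On the other hand, one needs to know that $K_1(D'H)$ is generated by these monomial units. This should follow from an iterated Bass--Heller--Swan-type reduction along a $\ZZ$-basis of $H$, presenting $D'H$ as a tower of twisted Laurent polynomial extensions of $D'$ and reducing the $K_1$ computation at each stage down to $K_1(D')\cong(D')^\times_{\mathrm{ab}}$ via the Dieudonné determinant. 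Combining the two statements, $\det_D M$ is, modulo commutators in $D^\times$, a product of monomial units of $D'H$, so its image under $P$ lies in $H\subseteq\Poly(H)$ and vanishes in $\PolyT(H)$.
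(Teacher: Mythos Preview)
Your reduction via \cref{lemm:torsion:invariance} to the vanishing of $P\bigl(\det_D(\alpha_*(\rho(\cone(f_*))))\bigr)$ in $\PolyT(H)$ is exactly the paper's argument. The paper then simply invokes \cite[Corollary~5.16]{Kielak2018}, which asserts precisely your Key step: the image of any class in $\Kr(\ZZ G)$ under $P\circ\det_D\circ\alpha_*$ is a singleton.

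Your proposed route to the Key step through an iterated twisted Bass--Heller--Swan decomposition is a valid alternative, but it is considerably heavier than what is actually needed, and the hand-waved part (``This should follow from\dots'') hides real work. You would have to check that the intermediate twisted Laurent extensions $D'\ZZ^k$ are regular Noetherian so that the Nil-terms in the twisted BHS sequence vanish, and then verify that the $K_0$ summand at each stage is generated by the class of the free module of rank one, so that its contribution to $K_1(D'H)$ is represented by the monomial unit $t_k$. All of this is true, but it is a substantial detour through algebraic $K$-theory for what is ultimately an elementary statement about polytopes.

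By contrast, the argument behind \cite[Corollary~5.16]{Kielak2018} is more direct: one first shows that for any square matrix $M$ with entries in $D'H$ and invertible over $D=\Ore(D'H)$, the element $P(\det_D M)\in\Poly(H)$ is represented by a \emph{single} polytope (this uses the embedding of $D'H$ into Malcev--Neumann series rings for varying biorderings of $H$, cf.\ \cite[Theorem~4.10]{Kielak2018}). Since both $\alpha(B)$ and $\alpha(B^{-1})$ have entries in $D'H$, both $P(\det_D\alpha(B))$ and $P(\det_D\alpha(B^{-1}))$ are single polytopes, and their Minkowski sum is $\{0\}$; but the only single polytopes summing to a point are themselves points. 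This avoids $K$-theory entirely.
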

\begin{proof}
Let $X$ and $X'$ be $D$-acyclic finite free $G$-CW-complexes $G$-homotopy equivalent via $f\colon X\to X'$. We denote the induced homotopy equivalence between $X/G$ and $X'/G$ by $\overbar f$. By \cref{lemm:torsion:invariance}, the agrarian torsions of $X$ and $X'$ are related via
\[\torsionag[D](X')-\torsionag[D](X)=\det\nolimits_D(\rho(\overbar f)).\]
After applying the polytope homomorphism, we obtain
\[\Pag{D}(X')-\Pag{D}(X)=P(\det\nolimits_D(\rho(\overbar f))).\]
The latter polytope is a singleton by~\cite[Corollary~5.16]{Kielak2018} and hence $\Pag{D}(X')=\Pag{D}(X)\in\PolyT(H)$.
\end{proof}

Because of the previous proposition, the agrarian polytope of the universal covering of the classifying space of a group, which is only well-defined up to $G$-homotopy equivalence, does not depend on the choice of a particular $G$-CW-model. We are thus led to
\begin{defi}
\label{defi:polytope:agrarian:group}
Assume that $G$ is \emph{of type $\mathtt{F}$}, i.e., let there be an unequivariantly contractible finite free $G$-CW-complex $EG$.
Let $\alpha\colon\ZZ G\to D$ be a rational agrarian map for $G$.
We say that $G$ is \emph{$D$-acyclic} if any such $G$-CW complex is $D$-acyclic. If this is the case, we define the \emph{$D$-agrarian polytope of $G$} to be
\[\Pag{D}(G)\coloneqq \Pag{D}(EG).\]
\end{defi}

For future reference, we record the following direct consequence of \cref{lemm:torsion:additivity}:
\begin{lemm}
Let $0\to C'_*\to C_*\to C''_*\to 0$ be a short exact sequence of finite based free $\ZZ G$-chain complexes such that the preferred basis of $C_*$ is composed of the preferred basis of $C'_*$ and preimages of the preferred basis elements of $C''_*$. Assume that any two of the complexes are $D$-acyclic. Then so is the third and
\[\Pag{D}(C_*)=\Pag{D}(C'_*)+\Pag{D}(C''_*).\]
\end{lemm}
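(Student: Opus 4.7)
The plan is to observe that this is a nearly direct consequence of the additivity of agrarian torsion together with the fact that the polytope homomorphism $P$ is a group homomorphism, so essentially no work remains beyond assembling the pieces.

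First I would invoke \cref{lemm:torsion:additivity} applied to the short exact sequence $0\to C'_*\to C_*\to C''_*\to 0$. The hypothesis on the preferred bases of $C'_*$, $C_*$, $C''_*$ is exactly the compatibility assumption of that lemma, so assuming two of the three complexes are $D$-acyclic, the lemma tells us the third is as well, and furthermore
\[
\torsionag[D](C_*) = \torsionag[D](C'_*) + \torsionag[D](C''_*) \in \Dab[D]/\{\pm 1\}.
\]
This takes care of the acyclicity half of the claim and gives the key identity at the level of agrarian torsion.

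Next I would apply the polytope homomorphism $P\colon \Dab[\Ore(DH)]/\{\pm 1\}\to \Poly(H)$, where $H$ denotes the free part of the abelianisation of $G$. Because $\alpha$ is rational, $D$ is of the form $\Ore(D'H)$ and $P$ is defined on $\Dab[D]/\{\pm 1\}$; it is a group homomorphism by the discussion following \cref{defi:polytope:agrarian:group} (i.e.\ the reference to \cite[Lemma~3.12]{Kielak2018}). In particular $P$ respects inversion in the abelianised unit group, so from the identity above we immediately obtain
\[
P(-\torsionag[D](C_*)) = P(-\torsionag[D](C'_*)) + P(-\torsionag[D](C''_*)).
\]

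Finally, unfolding \cref{defi:polytope:agrarian:chain_complex}, which defines $\Pag{D}(C_*) \coloneqq P(-\torsionag[D](C_*))$, the previous display reads $\Pag{D}(C_*) = \Pag{D}(C'_*) + \Pag{D}(C''_*)$ in $\Poly(H)$, which is the asserted equality. Since every step is either a direct appeal to an already-proved statement or an application of the definition, there is no real obstacle; the only thing to keep in mind is that we are working in $\Poly(H)$ rather than $\PolyT(H)$, which is consistent with \cref{defi:polytope:agrarian:chain_complex} since the chain-complex version of the polytope does not quotient out by translations from $H$.
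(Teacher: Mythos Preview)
Your proof is correct and matches the paper's approach: the paper simply records this lemma as ``a direct consequence of \cref{lemm:torsion:additivity}'' without further argument, and you have spelled out exactly that deduction by applying the polytope homomorphism $P$ to the torsion identity and invoking \cref{defi:polytope:agrarian:chain_complex}.
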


\subsection{The agrarian polytope for arbitrary agrarian maps}
Let $\alpha\colon\ZZ G\to D$ be an agrarian map for a finitely generated group $G$, and denote the free part of the abelianisation of $G$ by $H$.
In this situation, we can pass to the rationalisation $\ZZ G\to \Ore(DH)$ of $\alpha$ as introduced in \cref{defi:agrarian:rational}, which is always rational.
Via this replacement, we can extend \cref{defi:polytope:agrarian:chain_complex,defi:polytope:agrarian,defi:polytope:agrarian:group} to arbitrary agrarian maps.
Two remarks are in order.

First, note that it is not clear that a chain complex $C_*$ that is $D$-acyclic is also $\Ore(DH)$-acyclic (nor vice versa).
Hence, in order to compute an agrarian polytope with respect to an arbitrary agrarian map, it is always necessary to check acyclicity with respect to its rationalisation.

Second, we now have introduced two potentially different definitions of the agrarian polytope for an agrarian map $\ZZ G\to D$ that is already rational: We could calculate the polytope directly with respect to this map or first replace it by its rationalisation.
As it turns out, these two a priori different approaches lead to the same agrarian polytope.
By verifying that our two definitions are compatible, we will as a byproduct establish a comparison with the $L^2$-torsion polytope.

We will first show that passing to the rationalisation of an agrarian map $\alpha\colon\ZZ G\to D$ that is already rational only changes the agrarian torsion by pushing forward along an inclusion of skew fields:
\begin{lemm}
\label{lemm:polytope:rational_torsion}
Let $G$ be a finitely generated group and $\alpha\colon\ZZ G\to D$ a rational agrarian map.
Denote the rationalisation of $\alpha$ by $\alpha_r$ and its target skew field by $D_r$.
Then $\alpha_r$ factors through $\alpha$, and hence any finite free $G$-CW-complex is $D$-acylic if and only if it is $D_r$-acyclic.
If this is the case, then
\[\torsionag[D_r](X)=j_*(\torsionag[D](X))\in \Dab[D_r]/\{\pm g\mid g\in G\},\]
where $j_*\colon \Dab[D]/\{\pm g\mid g\in G\}\to \Dab[D_r]/\{\pm g\mid g\in G\}$ is induced by an injective map $j\colon D\hookrightarrow D_r$ between the respective agrarian map.
\end{lemm}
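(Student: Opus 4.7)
The plan is to realise $j$ as the Ore-localisation of a natural inclusion of twisted group rings. Since $\alpha$ is rational, I fix a presentation $\alpha = \beta_r$ for some agrarian map $\beta\colon \ZZ G \to E$, so that $D = \Ore(EH)$ with $EH$ the twisted group ring built from the fixed section $s\colon H \to G$. The rationalisation $\alpha_r$ is then built from the twisted group ring $DH$ whose structure functions are conjugation by $\alpha(s(h)) = 1 \ast h \in EH \subset D$ and cocycles $\tau_{DH}(h, h') = \beta(s(h)s(h')s(hh')^{-1}) \in E \subset D$. The key step is to verify that restricting these structure functions to the subring $E \ast 0 \subset DH$ recovers those of $EH$: the cocycles agree tautologically, and for the twistings one computes the product $(1 \ast h)(e \ast 0)(1 \ast h)^{-1}$ inside $D = \Ore(EH)$ and uses the cocycle identity to see it equals $c_{EH}(h)(e) \ast 0$. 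Consequently, the obvious map $EH \hookrightarrow DH$ sending $e \ast h \mapsto e \ast h$ is a well-defined injective ring homomorphism, and passing to Ore localisations yields $j\colon D = \Ore(EH) \hookrightarrow \Ore(DH) = D_r$. The factorisation $\alpha_r = j \circ \alpha$ is then read off on the generators $k \in K$ and $s(h)$ of $G$: both sides send $k$ to $\beta(k) \ast 0$ and $s(h)$ to $1 \ast h$ inside $DH \subset D_r$.

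Granted $j$, the acyclicity equivalence is immediate from faithful flatness of $D_r$ over $D$: every $D$-module is a sum of copies of $D$, so $H_p(D_r \otimes_{\ZZ G} C_*) \cong D_r \otimes_D H_p(D \otimes_{\ZZ G} C_*)$ vanishes iff $H_p(D \otimes_{\ZZ G} C_*)$ does. For the torsion identity, I would pick a chain contraction $\gamma$ of $D \otimes_{\ZZ G} C_*$ and observe that $\id_{D_r} \otimes_D \gamma$ is a chain contraction of $D_r \otimes_{\ZZ G} C_* = D_r \otimes_D (D \otimes_{\ZZ G} C_*)$; the matrix over $D_r$ representing $c_* + (\id_{D_r} \otimes \gamma)$ is the entrywise image under $j$ of the matrix over $D$ representing $c_* + \gamma$. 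The Dieudonné determinant is natural with respect to ring homomorphisms between skew fields --- this is visible from the inductive definition of its canonical representative, which uses only ring operations and inversion, both preserved by $j$ --- so $\torsionag[D_r](C_*) = j_*(\torsionag[D](C_*))$ in $\Dab[D_r]/\{\pm 1\}$. The identity descends to the $G$-CW-setting because $j(\alpha(g)) = \alpha_r(g)$ for every $g \in G$, and hence $j_*$ respects the subgroups $\{\pm g \mid g \in G\}$ on both sides.

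The main obstacle is the compatibility of twistings in the inclusion $EH \hookrightarrow DH$: the structure functions on $DH$ are defined using conjugation by $1 \ast h$ performed \emph{inside} the Ore localisation $D$, and verifying that this restricts back to the original twisting $c_{EH}$ on $E$ requires a short but slightly delicate computation. In particular, one needs an explicit description of $(1 \ast h)^{-1} \in \Ore(EH)$, together with the cocycle identity for $\tau_{EH}$, to collapse the resulting expression back to $c_{EH}(h)(e) \ast 0$. Once this is in place, the remainder of the argument is pure formalism about faithfully flat extensions and Dieudonné determinants.
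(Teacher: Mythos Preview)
Your proposal is correct and follows the same route as the paper: both construct $j$ by lifting the inclusion $E \hookrightarrow D$ (the paper's $D' \hookrightarrow D$) to an inclusion of twisted group rings $EH \hookrightarrow DH$ and then Ore-localising, and both deduce the acyclicity equivalence from flatness and the torsion identity from naturality of the Dieudonné determinant under $j$. Your explicit extension of a $D$-chain contraction to $D_r$ by tensoring is in fact a cleaner packaging of the torsion step than the paper's appeal to rational closure of skew fields in overrings, which tacitly relies on the same manoeuvre.
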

\begin{proof}
\iffalse
We again write $H$ for the free part of the abelianisation of $G$ and $K$ for the kernel of the projection of $G$ onto $H$.
Recall that since $\alpha$ is assumed to be rational, it is of the form
\[\ZZ G\cong (\ZZ K)H \to D'H\hookrightarrow \Ore(D'H)=D\]
for some agrarian embedding $\alpha'\colon\ZZ G\to D'$.
Analogously, the construction of $\alpha_r$ exhibits it as the composition
\[\ZZ G\cong (\ZZ K)H \to DH \hookrightarrow \Ore(DH)=D_r.\]
Here, the structure functions of the twisted group ring $D'H$ are determined by the images of $h\in H$ under $\alpha'\circ s\colon H\to D'$, where $s$ is the section of the projection $G\to H$ chosen to define $D'H$.
We consider $D'$ as a skew subfield of $D$ via $d'\mapsto \frac{d'\ast 1}{1}$, which is a ring homomorphism since $s(1)=1$.
The same choice of a section results in the same structure functions (up to enlargening their targets to $D$) being used for the construction of $DH$, and hence we get an induced inclusion of rings $D'H\to DH$ that together with the maps $\ZZ G\to D'H$ and $\ZZ G\to DH$ forms a commutative triangle.
Passing to Ore fields of fractions, we obtain the desired injective map $j\colon D\hookrightarrow D_r$ between the agrarian maps $\alpha$ and $\alpha_r$.
\fi
As discussed in \cref{rema:betti:epic}, the agrarian maps $\alpha$ and $\alpha_r$ yield the same agrarian Betti numbers, which proves the acyclicity statement.

We now turn to the statement on agrarian torsion.
By~\cite[Lemma~10.34~(1)]{Luck2002}, a skew field (and more generally, a von Neumann regular ring) is \emph{rationally closed} in any overring, i.e., every matrix over the skew field that becomes invertible over the overring is already invertible in the skew field.
Applied to our situation, we obtain that the invertible matrices appearing in the construction of $\torsionag[D_r]$ following \cref{defi:torsion:torsion} are already invertible over the skew subfield $D$.
Since the Dieudonné determinant is by construction natural with respect to inclusions of skew fields, the second statement holds.

In our case, one can also replace the use of the lemma by the more direct observation that we may put the matrices appearing in the construction of $\torsionag[D_r]$ into an upper-triangular form using elementary matrices over the skew field $D$ since the entries of the matrices lie in $\ZZ G$, and hence in $D$. A (square) matrix in upper-triangular form over a skew field is invertible if and only if its diagonal elements are non-zero, in particular invertibility over $D_r$ implies invertibility over $D$.
\end{proof}

\begin{theo}
\label{theo:polytope:well_defined}
Let $G$ be a finitely generated group and $\alpha\colon\ZZ G\to D$ a rational agrarian map.
Denote the rationalisation of $\alpha$ by $\alpha_r$ and its target skew field by $D_r$.
Let $X$ be a $D$- or $D_r$-acyclic finite free $G$-CW complex. Then $X$ is both $D$- and $D_r$-acyclic and
\[\Pag{D_r}(X) = \Pag{D}(X)\in\PolyT(H).\]
\end{theo}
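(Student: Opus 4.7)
The plan is to leverage \cref{lemm:polytope:rational_torsion}, which already establishes that $\alpha_r = j \circ \alpha$ for an injection $j\colon D \hookrightarrow D_r$ between target skew fields, that $D$- and $D_r$-acyclicity coincide for any finite free $G$-CW complex, and that the torsions transform as $\torsionag[D_r](X) = j_*\bigl(\torsionag[D](X)\bigr)$ in $\Dab[D_r]/\{\pm g : g \in G\}$. The first claim of the theorem is immediate from the acyclicity part. For the polytope equality, it thus suffices to show that the polytope homomorphism is natural with respect to $j_*$, i.e.\ that the diagram
\[
\begin{tikzcd}
\Dab[D]/\{\pm g : g \in G\} \ar[r, "j_*"] \ar[d, "P"'] & \Dab[D_r]/\{\pm g : g \in G\} \ar[d, "P"] \\
\PolyT(H) \ar[r, equal] & \PolyT(H)
\end{tikzcd}
\]
commutes. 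Applying $P$ to both sides of the torsion identity (with signs flipped) will then yield $\Pag{D_r}(X) = \Pag{D}(X)$.

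To verify the commutativity, I would unpack the explicit description of $j$ from the proof of \cref{lemm:polytope:rational_torsion}. By rationality of $\alpha$ one has $D = \Ore(D'H)$ for some skew field $D'$, while by construction of the rationalisation $D_r = \Ore(DH)$; the map $j$ is obtained by Ore-localising the twisted group ring inclusion $D'H \hookrightarrow DH$ induced by $D' \hookrightarrow D$, which uses the same section of $G \twoheadrightarrow H$ and hence the same structure functions on both sides. The key observation is that this inclusion of twisted group rings is support-preserving in $H$: an element $p = \sum_{h \in H} u_h \ast h \in D'H$ is sent to the element of $DH$ with the very same $H$-expansion, only with its coefficients reinterpreted in the larger skew field $D$. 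Hence $\supp(j(p)) = \supp(p)$ as subsets of $H$, and consequently the Newton polytopes agree in $\Poly(H)$.

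Given an arbitrary $x \in \Dab[D]$, we may write $x = p q^{-1}$ with $p,q \in D'H$ by definition of $D = \Ore(D'H)$. Then $j(x) = p q^{-1}$ viewed as a fraction in $D_r = \Ore(DH)$, and the support-preservation above gives
\[P(j(x)) = P(p) - P(q) = P(x) \in \Poly(H),\]
establishing the desired commutativity, which then descends to $\PolyT(H)$. The only subtlety I anticipate is well-definedness: one must confirm that both $P(x)$ and $P(j(x))$ are independent of the chosen fractional representative $pq^{-1}$, but this is already built into $P$ being a group homomorphism on $\Dab[\Ore(-)]$, so no additional verification is required. With the square commuting, the polytope equality follows by substituting $-\torsionag[D](X)$ into the diagram and invoking \cref{lemm:polytope:rational_torsion}.
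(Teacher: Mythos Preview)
Your proposal is correct and follows essentially the same route as the paper: both invoke \cref{lemm:polytope:rational_torsion} for the acyclicity equivalence and the torsion identity $\torsionag[D_r](X)=j_*(\torsionag[D](X))$, and then reduce the polytope equality to the observation that the inclusion $D'H\hookrightarrow DH$ underlying $j$ preserves supports in $H$ (hence Newton polytopes). Your commutative-diagram packaging is a bit more explicit than the paper's one-line conclusion, but the argument is the same.
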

\begin{proof}
By \cref{lemm:polytope:rational_torsion}, the agrarian torsions of $X$ with respect to $D$ and $D_r$ are related via
\begin{equation}
    \label{eq:polytope:torsion}
    \torsionag[D_r](X)=j_*(\torsionag[D](X))\in \Dab[D_r]/\{\pm g\mid g\in G\},
\end{equation}
where $j_*$ is induced by the inclusion $j\colon D\hookrightarrow D_r$.
Recall that we defined the agrarian polytope with respect to $D_r$ as $\Pag{D_r}(X)=P(-\torsionag[D_r](X))$, where we use that $D_r$ is the Ore field of fractions of the twisted group ring $DH$.
Analogously, the agrarian polytope with respect to $D$ is defined as $\Pag{D}(X)=P(-\torsionag[D](X))$, where we use that $D$ is the Ore field of fractions of a twisted group ring $D'H$ for some skew field $D'$.
In light of~\cref{eq:polytope:torsion}, we are thus left to check that taking the support over $D'H$ gives the same result as pushing forward to $DH$ using $j$ and taking supports there.
But $j$ restricts to an inclusion of twisted group rings and thus preserves supports.
\end{proof}

\subsection{\texorpdfstring{Comparison with the $L^2$-torsion polytope}{Comparison with the L\^{}2-torsion polytope}}

We will now apply the results of the previous section to the Linnell skew field $\linnelld(G)$ of a finitely generated torsion-free group $G$ that satisfies the Atiyah conjecture.
As before, we denote the free part of the abelianisation of $G$ by $H$ and the kernel of the projection of $G$ onto $H$ by $K$.
Our aim is to compare the $L^2$-torsion polytope introduced in~\cite{FL2017} to the agrarian polytope associated to the agrarian embedding $\ZZ G\hookrightarrow \linnelld(G)$.

The most important feature of the Linnell skew field $\linnelld(G)$ is that it is the Ore field of fractions of a twisted group ring in which $\ZZ K$ embeds, namely $\linnelld(K)H$, see~\cite[Lemma~10.69]{Luck2002}.
Translated into the agrarian language, this means that the agrarian embedding $G\hookrightarrow \linnelld(G)$ is rational.
We have already observed in \cref{theo:torsion:linnelld} that universal $L^2$-torsion and agrarian torsion define the same element in the abelianised units of the Linnell skew field, whenever both are defined.
The construction of the $L^2$-torsion polytope in~\cite[Definition~4.21]{FL2017} is thus equivalent to our definition of the agrarian polytope with respect to the rational agrarian embedding $\ZZ G\hookrightarrow \linnelld(G)$.
By \cref{theo:polytope:well_defined}, the resulting polytope agrees with the agrarian polytope constructed using the rationalisation of $\ZZ G\hookrightarrow \linnelld(G)$, i.e., without using the special structure of the Linnell skew field.

Summarising our discussion, we have established
\begin{theo}
Let $G$ be a finitely generated torsion-free group that satisfies the Atiyah conjecture and consider the agrarian embedding $\ZZ G\hookrightarrow \linnelld(G)$ into the Linnell skew field.
Denote by $\linnelld(G)_r$ the target of the rationalisation of this embedding.
Let $X$ be an $L^2$-acyclic finite free $G$-CW complex. Then $X$ is both $\linnelld(G)$- and $\linnelld(G)_r$-acyclic and
\[ \PLtwo(X)=\Pag{\linnelld(G)}(X) =\Pag{\linnelld(G)_r}(X)\in\PolyT(H).\]
\end{theo}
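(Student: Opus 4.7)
The plan is to assemble the theorem from the three ingredients already prepared in the preceding discussion, so the proof itself will be short and mostly bookkeeping. First I would verify that the agrarian embedding $\alpha \colon \ZZ G \hookrightarrow \linnelld(G)$ is rational in the sense of \cref{defi:agrarian:rational}. For this one invokes~\cite[Lemma~10.69]{Luck2002}: writing $K$ for the kernel of the projection $G \twoheadrightarrow H$, the Linnell skew field decomposes as $\linnelld(G) = \Ore(\linnelld(K)H)$, with $\linnelld(K)H$ the twisted group ring appearing in \cref{lemm:agrarian:equivariant_embedding} for the subembedding $\ZZ K \hookrightarrow \linnelld(K)$. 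This identifies $\alpha$ as the rationalisation of $\ZZ G \hookrightarrow \linnelld(K)$, so $\alpha$ is rational and in particular $\Pag{\linnelld(G)}(X)$ is defined.

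Next I would handle the acyclicity claim. By \cref{theo:torsion:linnelld}, $\linnelld(G)$-acyclicity of $X$ coincides with $L^2$-acyclicity, and \cref{theo:polytope:well_defined} then guarantees that $\linnelld(G)_r$-acyclicity is equivalent as well. This takes care of the first assertion of the theorem and allows the agrarian polytopes on both sides of the claimed chain of equalities to be formed.

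For the polytope equalities, the middle equality $\Pag{\linnelld(G)}(X) = \Pag{\linnelld(G)_r}(X)$ is a direct application of \cref{theo:polytope:well_defined} to the rational agrarian embedding $\alpha$. The left equality $\PLtwo(X) = \Pag{\linnelld(G)}(X)$ is obtained by combining \cref{theo:torsion:linnelld} with the definition~\cite[Definition~4.21]{FL2017} of the $L^2$-torsion polytope: that definition is exactly the image under the polytope homomorphism $P \colon \Dab[\linnelld(G)]/\{\pm g \mid g \in G\} \to \PolyT(H)$ of $-\det(\torsionu(X))$, while \cref{theo:torsion:linnelld} identifies $\det(\torsionu(X))$ with $\torsionag[\linnelld(G)](X)$, whose image under $-P$ is by \cref{defi:polytope:agrarian} precisely $\Pag{\linnelld(G)}(X)$.

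Since every step is a direct citation of a previously established result, I do not expect any serious obstacle; the only place demanding a moment of care is matching the conventions of~\cite{FL2017} for the $L^2$-torsion polytope with our Newton polytope construction on $\Ore(\linnelld(K)H)$, but this was already reconciled in the paragraph preceding the theorem. Accordingly, the write-up can be delivered as a short chain of references rather than as an independent computation.
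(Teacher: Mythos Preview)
Your proposal is correct and follows essentially the same route as the paper: the theorem is stated there as a summary of the preceding discussion, which invokes exactly the ingredients you list (\cite[Lemma~10.69]{Luck2002} for rationality of $\ZZ G\hookrightarrow\linnelld(G)$, \cref{theo:torsion:linnelld} for the acyclicity and torsion comparison, \cite[Definition~4.21]{FL2017} for the left equality, and \cref{theo:polytope:well_defined} for the right one). One small slip: there is no embedding $\ZZ G\hookrightarrow\linnelld(K)$, so you should not describe $\alpha$ as ``the rationalisation of $\ZZ G\hookrightarrow\linnelld(K)$''; it suffices to say that the decomposition $\linnelld(G)=\Ore(\linnelld(K)H)$ exhibits $\alpha$ as a rational agrarian embedding.
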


\section{Application to deficiency 1 groups}

\subsection{The Bieri--Neumann--Strebel invariants and HNN extensions} In order to discuss some application of the theory of agrarian invariants, we need to first cover the BNS invariants and the HNN extensions.

\begin{defi}
Let $G$ be a group generated by a finite subset $S$, and let $X$ denote the Cayley graph of $G$ with respect to $S$. Recall that the vertex set of $X$ coincides with $G$. We define the \emph{Bieri--Neumann--Strebel} (or \emph{BNS}) \emph{invariant} $\Sigma^1(G)$ to be the subset of $H^1(G;\RR) \smallsetminus \{0\}$ consisting of the non-trivial homomorphisms (the \emph{characters}) $\phi \colon G \to \RR$ for which the full subgraph of $X$ spanned by $\phi^{-1}([0,\infty))\subseteq G$ is connected.
\end{defi}

The BNS invariants were introduced by Bieri, Neumann and Strebel in~\cite{Bierietal1987} via a different, but equivalent definition. It is an easy exercise to see that $\Sigma^1(G)$ is independent of the choice of the finite generating set $S$.

We now aim to give an interpretation of lying in the BNS invariant for integral characters $\phi \colon G \to \ZZ$. To do so, we need to introduce the notion of HNN extensions.

\begin{defi}
\label{defi:bns:bns}
Let $A$ be a group and let $\sigma\colon B\xrightarrow{\cong} C$ be an isomorphism between two subgroups of $A$. Choose a presentation $\langle S\mid R\rangle$ of $A$ and let $t$ be a new symbol not in $S$. Then the group $A*_\sigma$ defined by the presentation
\[\langle S, t\mid R, tbt^{-1}=\sigma(b)\ \forall b\in B\rangle\]
is called the \emph{HNN extension of $A$ relative to $\sigma\colon B \xrightarrow{\cong}C$}. We call $A$ the \emph{base group} and $B$ the \emph{associated group} of the HNN extension.

The HNN extension is called \emph{ascending} if $B = A$.

The homomorphism $\phi \colon A*_\sigma \to \ZZ$ given by $\phi(t) = 1$ and $\phi(s) = 0$ for every $s \in S$ is the \emph{induced character}.
\end{defi}

\begin{prop}[{\cite[Proposition 4.3]{Bierietal1987}}]
\label{prop:bns:hnn}
Let $G$ be a finitely generated group, and let $\phi \colon G \to \ZZ$ be a non-trivial character. We have $\phi \in \Sigma^1(G)$ if and only if $G$ is isomorphic to an ascending HNN extension with finitely generated base group and induced character $\phi$.
\end{prop}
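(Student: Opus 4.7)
The plan is to use the decomposition $G = K \rtimes \langle t \rangle$ induced by the choice of $t \in G$ with $\phi(t) = 1$, where $K = \ker \phi$ and conjugation by $t$ defines an automorphism $\sigma$ of $K$. An ascending HNN presentation of $G$ with induced character $\phi$ corresponds exactly to a finitely generated subgroup $B \leq K$ satisfying $\sigma(B) \subseteq B$ and $K = \bigcup_{n \geq 0} t^{-n} B t^n$.

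For the direction ($\Leftarrow$), assume $G = B *_\sigma$ with $B$ finitely generated by $S_B$, and $\phi$ the induced character. Taking $\{t\} \cup S_B$ as the finite generating set, the relation $tBt^{-1} \subseteq B$ allows us to rewrite any $g \in V_+ := \phi^{-1}([0, \infty))$ as a word in $\{t\} \cup S_B^{\pm 1}$ whose prefixes all lie in $V_+$: using $t^{-1} b = \sigma^{-1}(b)\, t^{-1}$ whenever $b \in \sigma(B)$, one pushes $t^{-1}$-letters past intervening base-group elements once enough $t$-letters have preceded. The corresponding path in the Cayley graph lies in $V_+$, giving $\phi \in \Sigma^1(G)$.

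For the direction ($\Rightarrow$), take a finite generating set $\{t\} \cup S_K$ of $G$ with $S_K \subseteq K$. For each $s \in S_K$, the BNS condition applied to $ts \in V_+$ produces a path $1 \to ts$ in $V_+$, encoded by a word $x_1 \cdots x_m$ with non-negative partial $\phi$-sums; extending by $t^{-1}$ yields such a word for $tst^{-1}$. The standard identity
\begin{equation*}
    tst^{-1} = \prod_{i} t^{p_i}\, x_i\, t^{-p_{i+1}}, \qquad p_i := \phi(x_1 \cdots x_{i-1}),
\end{equation*}
in which factors with $x_i \in \{t^{\pm 1}\}$ trivialise while those with $x_i \in S_K^{\pm 1}$ reduce to $t^{p_i}\, x_i\, t^{-p_i}$ with $p_i \geq 0$, expresses $tst^{-1}$ as a product of conjugates $t^j s_l^{\pm 1} t^{-j}$ for non-negative $j$. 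Letting $N$ be a uniform bound over $S_K$, we set $B := \langle t^j s_l t^{-j} : 0 \leq j \leq N,\, s_l \in S_K\rangle$; applying the same identity to arbitrary $g \in K$ yields $K = \bigcup_n t^{-n} B t^n$, and the ascending HNN presentation with base $B$ and induced character $\phi$ follows once $B$ is shown to be $\sigma$-stable.

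The main obstacle is this $\sigma$-stability, equivalently the stabilisation of the ascending chain $B_0 \subseteq B_1 \subseteq \cdots$ of subgroups $B_n := \langle t^j s_l t^{-j} : 0 \leq j \leq n \rangle$ at $B_N$. A one-step application of the BNS condition bounds the depth only for the immediate conjugates $ts_l t^{-1}$; to control $t^{n+1} s_l t^{-(n+1)}$ for $n \geq N$, one must re-apply the BNS condition to appropriately translated elements in $V_+$, in an inductive bootstrapping argument that exploits the connectedness of $V_+$ in its full uniform strength rather than just its one-step consequences for the original generators.
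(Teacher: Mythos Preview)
The paper does not prove this proposition; it is quoted from \cite{Bierietal1987} without argument, so there is no proof in the paper to compare your attempt against.

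Your outline is along standard lines, but the $(\Rightarrow)$ direction as written is vacuous at its key step. Applying connectedness of $V_+$ to $ts$, and hence to $tst^{-1}$, gives nothing: the word $t\cdot s\cdot t^{-1}$ already has non-negative prefix heights, so your displayed identity collapses to $tst^{-1}=tst^{-1}$ and yields only $tst^{-1}\in B_1$, which is a generator of $B_1$ by definition. The genuine content of $\phi\in\Sigma^1(G)$ is a $V_+$-path from $1$ to $t^{-1}st$, whose naive word $t^{-1}\cdot s\cdot t$ does dip below zero; this produces $t^{-1}st\in B_N$ for a uniform $N$. From there the stabilisation you flag as ``the main obstacle'' is in fact a one-line induction: since $B_{n+1}=\langle B_n,\,t^{n+1}S_Kt^{-(n+1)}\rangle$, the hypothesis $t^{-1}B_nt\subseteq B_N$ together with $t^nS_Kt^{-n}\subseteq B_n\subseteq B_N$ gives $t^{-1}B_{n+1}t\subseteq B_N$ for all $n<N$, hence $t^{-1}B_Nt\subseteq B_N$. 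No ``inductive bootstrapping'' via repeated appeals to connectedness is needed. Note also that this yields an ascending structure for the stable letter $t^{-1}$; the orientation of ``ascending'' and the sign in the Cayley-graph definition of $\Sigma^1$ must be matched carefully, and your $(\Leftarrow)$ sketch is sensitive to precisely this.
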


\begin{defi}
\label{marked def}
Suppose that $G$ is finitely generated.
Let $P$ be a polytope in the $\RR$-vector space $H_1(G;\RR)$, and let $F$ be a face of $P$. A \emph{dual} of $F$ is a connected component of the subspace
\[
\{ \phi \in H^1(G;\RR) \mid F_\phi(P) = F \}.
\]

A \emph{marked polytope} is a pair $(P,m)$, where $P$ is a polytope in $H_1(G;\RR)$, and $m$ is a \emph{marking}, that is a function $m \colon H^1(G;\RR) \to \{0,1\}$, which is constant on duals of faces of $F$, and such that $m^{-1}(1)$ is open.

The pair $(P,m)$ is a \emph{polytope with marked vertices} if $m^{-1}(1)$ is a union of some duals of vertices of $P$.

The marking $m$ will usually be implicit, and the characters $\phi$ with $m(\phi) = 1$ will be called \emph{marked}.
\end{defi}

\subsection{BNS invariants for agrarian deficiency 1 groups}
\label{sec defic 1}

In this section we will see how agrarian polytopes connect with the theory of BNS invariants of general deficiency 1 groups.

\begin{defi}
Let $G$ be a finitely presented group. The \emph{deficiency} of $G$ is the minimum taken over all finite presentations of $G$ of the difference between the number of generators and the number of relators.
\end{defi}

\begin{theo}[{\cite[Theorem 7.2]{Bierietal1987}}]
If $G$ is a finitely presented group of deficiency at least $2$, then $\Sigma^1(G) = \emptyset$.
\end{theo}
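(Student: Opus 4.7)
The plan is to argue by contradiction: assume $\Sigma^1(G) \neq \emptyset$, and deduce $\textrm{def}(G) \leq 1$. The first step is to locate an integer character in $\Sigma^1(G)$. This uses the classical fact (\cite{Bierietal1987}) that $\Sigma^1(G)$ is open in $H^1(G;\RR)\smallsetminus\{0\}$ and invariant under positive rescaling: since rational characters are dense in $H^1(G;\RR)$ and any rational character becomes integer after positive rescaling, some integer $\phi$ must lie in $\Sigma^1(G)$. \Cref{prop:bns:hnn} then exhibits $G$ as an ascending HNN extension $G = A *_\sigma$ with finitely generated base group $A$ and induced character $\phi$, so that $\sigma \colon A \xrightarrow{\cong} C \subseteq A$.

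The main estimate is the upper bound $\dim_\QQ H_1(G;\QQ) - \dim_\QQ H_2(G;\QQ) \leq 1$. I would realise $G = A *_\sigma$ topologically as the mapping torus of the self-map $f \colon BA \to BA$ obtained by composing $B\sigma \colon BA \to BC$ with the inclusion $BC \hookrightarrow BA$; this mapping torus is a model for $BG$, and so yields the Wang-type long exact sequence
\[
\cdots \to H_n(A;\QQ) \xrightarrow{1 - f_*} H_n(A;\QQ) \to H_n(G;\QQ) \to H_{n-1}(A;\QQ) \to \cdots
\]
Finite generation of $A$ makes $H_1(A;\QQ)$ finite-dimensional, so the endomorphism $1-f_*$ on it has kernel and cokernel of a common dimension $d$. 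Splicing the sequence in degrees $0,1,2$, and using that $f_*$ is the identity on $H_0(A;\QQ)$, yields $\dim_\QQ H_1(G;\QQ) = d + 1$ and $\dim_\QQ H_2(G;\QQ) \geq d$, whence the inequality.

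To conclude, I would invoke the classical deficiency bound $\textrm{def}(G) \leq \dim_\QQ H_1(G;\QQ) - \dim_\QQ H_2(G;\QQ)$. This follows from the observation that any finite presentation of $G$ with $g$ generators and $r$ relators produces a presentation $2$-complex $K$ with $\pi_1(K) = G$ and $\chi(K) = 1 - g + r$; combining $H_1(K;\QQ) \cong H_1(G;\QQ)$ with the Hopf surjection $H_2(K;\ZZ) \twoheadrightarrow H_2(G;\ZZ)$ gives $g - r \leq \dim_\QQ H_1(G;\QQ) - \dim_\QQ H_2(G;\QQ)$, and maximising over presentations gives the claim. Together with the previous estimate this forces $\textrm{def}(G) \leq 1$, contradicting the hypothesis $\textrm{def}(G) \geq 2$.

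The principal obstacle is the first step, the reduction to integer characters: the openness of $\Sigma^1(G)$ is not transparent from the Cayley graph definition recalled in the paper and must be imported from \cite{Bierietal1987}. The Mayer--Vietoris calculation and deficiency bound are standard; the only delicate point is that $H_2(A;\QQ)$ may be infinite-dimensional for a merely finitely generated $A$, but only the finite-dimensional space $H_1(A;\QQ)$ enters the dimension count that forces the contradiction.
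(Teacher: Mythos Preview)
The paper does not supply a proof of this statement; it is quoted verbatim as \cite[Theorem~7.2]{Bierietal1987} and used as a black box to motivate the focus on deficiency~$1$. There is therefore no argument in the paper to compare yours against.

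Your argument is correct. The reduction to an integer character via openness of $\Sigma^1(G)$ and the identification with an ascending HNN extension via \cref{prop:bns:hnn} are precisely the moves the paper itself makes later in the proof of \cref{theo:deficency:marked}, so you are not importing anything beyond what the paper already relies on. The Wang sequence computation and the classical bound $\mathrm{def}(G)\leq \dim_\QQ H_1(G;\QQ)-\dim_\QQ H_2(G;\QQ)$ are both sound. The one point worth stating explicitly is why the mapping torus of $B\sigma\colon BA\to BA$ computes $H_*(G;\QQ)$ rather than merely $H_*(T_f;\QQ)$: this holds because the mapping torus of an injective endomorphism of an aspherical space is again aspherical (for instance by Bass--Serre theory, viewing $T_f$ as a graph of spaces with $\pi_1$-injective edge inclusions), hence is a model for $BG$.
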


Because of the above result, we will focus on groups of deficiency $1$. This is a very rich family of groups, containing all ascending HNN extensions of finitely generated free groups, and almost all two-generator one-relator groups.

The structure of $\Sigma^1(G)$ for agrarian groups of deficiency 1 was studied by the second author in~\cite[Section 5.6]{Kielak2018}. There, stronger results were obtained for deficiency 1 groups when they were assumed to satisfy the Atiyah conjecture. The reason for this was that there was no theory of agrarian polytopes available, in contrast to the theory of $L^2$-torsion polytopes. Since we have developed the missing theory here, we can now strengthen the results and prove the following.

\begin{theo}
\label{theo:deficency:marked}
Let $G$ be a $D$-agrarian group of deficiency 1 with agrarian embedding $\ZZ G\hookrightarrow D$.
Let $D_r$ be the target of the rationalisation of this embedding in the sense of \cref{defi:agrarian:rational}.
Then $G$ is $D_r$-acyclic and there exists a marking of the vertices of the agrarian polytope $\Pag{D_r}(G)$ such that for every $\phi \in H^1(G;\RR) \smallsetminus \{0\}$ we have $\phi \in \Sigma^1(G)$ if and only if $\phi$ is marked.
\end{theo}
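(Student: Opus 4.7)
My plan is to mimic the approach used for Atiyah-conjecture groups of deficiency $1$ in~\cite[Section~5.6]{Kielak2018}, with the agrarian torsion developed above replacing universal $L^2$-torsion. First I would fix a finite presentation $\langle x_1, \dots, x_n \mid r_1, \dots, r_{n-1}\rangle$ of $G$ realising the deficiency and let $X$ denote its presentation 2-complex. Since $G$ need not be of type $\mathtt{F}$, I would extend \cref{defi:polytope:agrarian:group} by setting $\Pag{D_r}(G) \coloneqq \Pag{D_r}(\widetilde X)$ for the universal cover $\widetilde X$; independence of the chosen deficiency-realising presentation would then be verified by a $G$-simple-homotopy invariance argument extending \cref{prop:polytope:invariance}, relying on the fact that the change in torsion induced by a Tietze-like move lies in the image of $\Kr(\ZZ G)$ and hence has singleton Newton polytope in $\PolyT(H)$.

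The cellular chain complex of $\widetilde X$ takes the form
\[
0 \to (\ZZ G)^{n-1} \xrightarrow{\partial_2} (\ZZ G)^n \xrightarrow{\partial_1} \ZZ G \to 0,
\]
with $\partial_1 = (x_1 - 1, \dots, x_n - 1)$ and $\partial_2$ given by the Fox Jacobian of the relators. For $D_r$-acyclicity, deficiency $1$ forces $\dim_\QQ H_1(G;\QQ) \geq 1$, so the rationalisation $\alpha_r$ cannot factor through the augmentation, whence $\bag[D_r]_0(\widetilde X) = 0$ by \cref{theo:betti:props:zeroth}. Since $\chi(X) = 0$, the Euler--Poincar\'e formula \cref{theo:betti:props:euler} yields $\bag[D_r]_1(\widetilde X) = \bag[D_r]_2(\widetilde X)$, so it suffices to establish that $\partial_2 \otimes \id_{D_r}$ is injective, equivalently that the Fox Jacobian has full column rank over $D_r$.

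With acyclicity in hand, $\torsionag[D_r](\widetilde X) \in \Dab[D_r]/\{\pm g\mid g\in G\}$ and hence $\Pag{D_r}(G) \in \PolyT(H)$ are defined. For the marking, recall from \cref{prop:bns:hnn} that an integral character $\phi \in H^1(G;\ZZ)\setminus\{0\}$ lies in $\Sigma^1(G)$ iff $G$ admits an ascending HNN decomposition with induced character $\phi$. Such a splitting corresponds algebraically to a leading-term condition in the twisted group ring $DH$: a specific Dieudonn\'e minor of $\partial_2$, ordered by $\phi$-degree, must have its lowest-degree coefficient a unit in $D$. This is precisely the condition that $F_\phi(\Pag{D_r}(G))$ is a single vertex $v$ whose accompanying coefficient in $DH$ is invertible; I would declare such vertices $v$ marked, which in particular ensures that $m^{-1}(1)$ is a union of open dual chambers of vertices as required by \cref{marked def}. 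The biconditional for integral $\phi$ then follows, and for general $\phi$ it is closed via a Sikorav-type Novikov-ring characterisation of $\Sigma^1$, together with the openness of both $\Sigma^1(G)$ (by~\cite{Bierietal1987}) and of $m^{-1}(1)$, in combination with density of integral characters in $H^1(G;\RR)\setminus\{0\}$.

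The main obstacle will be the injectivity of $\partial_2$ over $D_r$. In the $L^2$-setting this exploits specific structural properties of Linnell's skew field, whereas for a general agrarian embedding one needs a substitute argument, most naturally a Sylvester-type rank function on the twisted group ring $DH$ that extends to the Ore localisation $D_r$ and detects full column rank for Fox Jacobians of deficiency-realising presentations. A secondary challenge is the purely algebraic reformulation of the Brown-style algorithm identifying ascending HNN splittings with leading-term conditions on Dieudonn\'e minors over $D_r$, so that the marking and the biconditional on characters can be established without recourse to the analytic incarnation of the Linnell field used in~\cite{Kielak2018}.
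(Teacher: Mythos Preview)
Your overall strategy---reduce to the cellular chain complex of a deficiency-$1$ presentation, establish $D_r$-acyclicity, compute the polytope from Dieudonn\'e minors of the Fox Jacobian, and mark vertices via a leading-term criterion for ascending HNN splittings---matches the paper's in outline. However, you diverge from the paper at the one step you yourself flag as the ``main obstacle'': the proof of $D_r$-acyclicity.

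You propose to show directly that $\partial_2$ has full column rank over $D_r$, invoking a Sylvester-type rank function on $DH$. The paper avoids this entirely. Instead, it argues: if $\Sigma^1(G)=\emptyset$, the trivial marking suffices; otherwise, by openness of $\Sigma^1(G)$ there is an integral character $\phi\in\Sigma^1(G)$, and \cref{prop:bns:hnn} realises $G$ as an ascending HNN extension with finitely generated base group, hence as the fundamental group of a mapping torus with finite $1$-skeleton. Now \cref{theo:betti:torus}---the mapping-torus vanishing theorem for \emph{rational} agrarian maps, proved earlier in the paper precisely for this purpose---gives $\bag[D_r]_1(G)=0$ immediately. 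This is far simpler than constructing a bespoke rank function, and it is the reason rationality was built into the theory.

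This same observation also dissolves your type-$\mathtt{F}$ concern. Once $\bag[D_r]_1(G)=0$, a dimension count over $D_r$ forces the boundary map $c_3\colon C_3\to C_2$ in any classifying space extending the presentation complex to vanish after tensoring with $D_r$, hence to vanish over $\ZZ G$ since the agrarian map is an embedding. Thus the presentation $2$-complex is already aspherical, $G$ is of type $\mathtt{F}$, and the agrarian polytope of \cref{defi:polytope:agrarian:group} applies without the simple-homotopy workaround you sketch.

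For the marking itself, your leading-term/Brown-algorithm description is essentially what the paper imports from~\cite[Theorems~5.23 and~5.25]{Kielak2018}: one obtains markings $m_i$ on the polytopes $P(\det D_r\otimes A_i)$ over the open sets $U_i=\{\phi:\phi(s_i)\neq 0\}$, relates each $P(\det D_r\otimes A_i)$ to $\Pag{D_r}(G)$ via \cref{lemm:torsion:additivity}, and patches the $m_i$ into a global marking using~\cite[Lemma~5.12]{Kielak2018}. Your proposal to pass from integral to real characters by density and openness is consonant with this, though the paper handles it via the cited patching lemma rather than an explicit density argument.
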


Before proceeding to the proof, let us first contrast the statement with~\cite[Theorem 5.23]{Kielak2018}: here, we prove that $\Sigma^1(G)$ coincides with the marked characters coming from a polytope \emph{with marked vertices}, whereas in~\cite{Kielak2018} the marking came from a marked polytope, and hence was (a priori) not as rigid as we now show it to be. In particular, we now show that if for some character $\phi \colon G \to \RR$, we can connect $\phi$ to $-\phi$ using a path lying in $\Sigma^1(G)$, then in fact $\Sigma^1(G) = H^1(G;\RR) \smallsetminus \{0\}$.

\begin{proof}[Sketch proof of \cref{theo:deficency:marked}]
The proof follows that of~\cite[Theorem 5.25]{Kielak2018} very closely, and therefore we will give here merely a sketch.

\begin{comment}
Since we are not using any special properties of $D$, we may assume that $D$ is in fact the Ore localisation of some twisted group ring of the free part of the abelianisation of $G$ with skew field coefficients, that is $D = D_r$ in the notation of \cref{defi:polytope:agrarian}.
\end{comment}

We take $X$ to be a classifying space for $G$ extending the presentation complex coming from a presentation realising the deficiency of $G$ (which is equal to $1$). We let $C_*$ denote the cellular chain complex of the universal covering of $X$; by choosing a cellular basis of $C_*$ we will treat $C_*$ as a chain complex of free finitely generated $\ZZ G$-modules.

If $\Sigma^1(G) = \emptyset$ then the result follows by taking the trivial marking $H^1(G;\RR) \to \{0\}$. We may thus assume that $\Sigma^1(G) \neq\emptyset$. Moreover, since $\Sigma^1(G)$ is open (see~\cite[Theorem A]{Bierietal1987}) and stable under positive homotheties (by definition), there exists a character $\phi \colon G \to \ZZ$ inside of $\Sigma^1(G)$. By \cref{prop:bns:hnn}, this means that we can write $G$ as an ascending HNN extension with a finitely generated base group. On the level of classifying spaces, this tells us that $G$ admits a classifying space which is a mapping torus of a selfmap of a space with finite $1$-skeleton. We conclude using \cref{theo:betti:torus} that $\bag[D_r]_1(G)=0$, where we use that the agrarian embedding $\ZZ G\hookrightarrow D_r$ is rational.

Knowing that $\bag[D_r]_1(G)=0$ allows us to immediately conclude that the boundary map
\[c_3 \colon C_3 \to C_2\]
is trivial, since it has to be trivial after tensoring with $D_r$ for reasons of dimension -- recall that $C_2$ is a free module of rank $1$ less than $C_1$, and $C_0$ has rank $1$. Therefore, the $2$-skeleton of $X$ is a classifying space for $G$, and hence we may assume that $X$ is $2$-dimensional.

Now we need to look at the chain complex $C_*$ more closely.
Since it is the cellular chain complex of the Cayley $2$-complex (the universal covering of the presentation complex), we have $C_*$ equal to
\[\ZZ G^{n-1} \xrightarrow{A} \ZZ G^n\xrightarrow{(s-1)_{s \in S}} \ZZ G\]
where $S$ is a generating set of $G$ of cardinality $n$. We enumerate $S = \{s_1, \dots, s_n\}$ and define
\[
U_i \coloneqq \{\phi \in H^1(G;\RR) \mid \phi(s_i) \neq 0 \}.
\]
We let $A_i$ denote the matrix $A$ with the row corresponding to $s_i$ removed.
Now we are exactly in the situation discussed in the proof of~\cite[Theorem 5.25]{Kielak2018} (and in~\cite[Theorem 5.23]{Kielak2018}). The argument given there shows that for every $i \in \{1, \dots, n\}$ there exists a marking $m_i$ of vertices of $P_i = P(\det D_r\otimes A_i)$ such that a character $\phi \in U_i$ lies in $\Sigma^1(G)$ if and only if $m_i(\phi) = 1$.

By mapping $C_*$ to the chain complex $\ZZ G\xrightarrow{s_i-1}\ZZ G$ concentrated in degrees 1 and 0 and considering the resulting short exact sequence of chain complexes, we see from \cref{lemm:torsion:additivity} that for every $i$ we have
\[
\Pag{D_r}(G) = P(\det D_r\otimes A_i) - P(s_i-1)
\]
Now~\cite[Lemma 5.12]{Kielak2018} allows us to construct a marking $m$ of the vertices of $\Pag{D_r}(G)$ which agrees with $m_i$ on $U_i$ for every $i$. This finishes the proof.
\end{proof}

\bibliography{main}
\end{document}